\newcommand{\absbreak}{}
\newtheorem{theorem}{\bf Theorem}[section]
\newtheorem{proposition}{\bf Proposition}[section]
\newtheorem{remark}{\bf Remark}[section]
\newcommand{\Act}{\mathscr{A}}
\newcommand{\Lag}{\mathscr{L}}
\newcommand{\Ham}{\mathcal{H}}
\newcommand{\Xiset}{\Xi}
\newcommand{\Bset}{\mathscr{B}}
\newcommand{\Dt}{\mathrm{D}_t}
\newcommand{\Id}{\mathbf{I}_d}
\newcommand{\Traj}{\bm{\Phi}}
\newcommand{\txint}{\int_t\!\int_{\Omega_t}}
\newcommand{\half}{\tfrac{1}{2}}
\newcommand{\mtrx}[1]{\mathbf{#1}}
\newcommand{\vel}{\bm{u}}
\newcommand{\lift}{\bm{f}}
\newcommand{\pth}[1]  {\left( #1 \right)}
\newcommand{\brck}[1]  {\left[ #1 \right]}
\newcommand{\acco}[1]{\left\{ #1 \right\}}
\newcommand{\rot}{\mathbf{rot}}
\newcommand{\jump}[1]{\llbracket #1\rrbracket} 
\begin{document}
\title{An all-topology two-fluid model for two-phase flows derived through Hamilton's Stationary Action Principle}
\author{Ward Haegeman$^{1,2}$,
Giuseppe Orlando$^{2}$,
Samuel Kokh$^{3}$
and
Marc Massot$^{2}$}

\address{$^{1}$DMPE, ONERA, Université Paris-Saclay, F-91123 Palaiseau, France\\
$^{2}$CMAP, CNRS, École polytechnique, Institut Polytechnique de Paris, Route de Saclay, 91120 Palaiseau, France\\
$^{3}$Maison de la Simulation, UAR 3441, Bâtiment 565 PC 190 - Digiteo, CEA Saclay 91191, Gif-sur-Yvette, France}

\subject{applied mathematics, mathematical modelling, fluid mechanics}

\keywords{compressible two-phase flows, multi-fluid models, all-topology models, symmetric hyperbolic systems, Hamilton's principle}

\corres{Ward Haegeman\\
\email{ward.haegeman@polytechnique.edu}} 

\maketitle
\begin{abstract}
We present a novel multi-fluid model for compressible two-phase flows. The model is derived through a newly developed Stationary Action Principle framework. It is fully closed and introduces a new interfacial quantity, the interfacial work. The closures for the interfacial quantities are provided by the variational principle. They are physically sound and well-defined for all types of flow topologies. The model is shown to be hyperbolic, symmetrizable, and admits an entropy conservation law. Its non-conservative products yield uniquely defined jump conditions which are provided. As such, it allows for the proper treatment of weak solutions. In the multi-dimensional setting, the model presents lift forces which are discussed. The model constitutes a sound basis for future numerical simulations. \absbreak
\end{abstract} 
\newpage

\section{Introduction}

The class of multi-fluid models is widely accepted among the modelling strategies to accurately describe two-phase flows. Moreover, multi-fluid models are viable in a wide range of flow regimes and can offer practical advantages. For instance, multi-fluid models, obtained through the method of moments, allow for an Eulerian description of disperse flows that may include poly-dispersion and small-scale dynamics \cite{loison:2025}. They are a cost-effective alternative to the usual Lagrangian models for which the sampling cost increases drastically with the dimension of the phase space.
In the context of separated phase flows, multi-fluid models avoid the need for any explicit interface tracking or reconstruction by relying on a diffuse interface representation, and therefore, they are naturally adapted to handle important topology changes. Moreover, recent contributions have proposed two-scale multi-fluid models to describe both the separated and disperse regimes \cite{frederix:2021,loison:2024}. Yet, there is still no universally accepted basis multi-fluid formulation for two-phase flows and the development of new models remains an active research area. In particular, the development of models that are out of velocity equilibrium remains challenging, yet necessary to accurately describe phenomena such as atomization, slip-flow, added-mass, particle inertia, \textit{etc}. One of the key challenges is the derivation of a model that is well-posed and allows for the correct treatment of shocks, while being physically valid over a sufficiently wide range of flow regimes.

Among the existing two-velocity models, single-pressure models are generally non-hyperbolic \cite{ishii:1975,stuhmiller:1977,ramshaw:1978,vazquez-gonzalez:2016}. Although a hyperbolic single-pressure model has been recently derived through kinetic theory in the context of disperse flows \cite{fox:2020}, the model lacks the mathematical foundations to properly treat shocks as its jump conditions are not well-defined. Moreover, it lacks a supplementary conservation law for the entropy. Consequently, as an alternative to single-pressure models, many efforts have been devoted to the development of two-pressure models \cite{ransom:1984,baer:1986,chen:1996,saurel:1999}. Among these models, the Baer-Nunziato model \cite{baer:1986}, initially proposed for deflagration-to-detonation transition in granular materials, has been extended into a class of two-phase flow models which has become quite popular \cite{saurel:1999,saurel:2001,gallouet:2004,tokareva:2010,coquel:2013,hantke:2019,thanh:2022}. Not relying on any equilibrium assumption, the model's dependent variables are the phasic flow variables and the volume fraction. The volume fraction advection term, as well as the momentum and energy exchange terms, depend on interfacial quantities, namely the interfacial velocity $\vel_I$, and the interfacial pressure $p_I$, which must be closed as functions of the dependent variables. The closure problem is common to many two-pressure models and several expressions for the interfacial quantities have been proposed \cite{ransom:1984,chen:1996,saurel:1999,saurel:2001,saurel:2003,gallouet:2004,jin:2006,perrier:2021}. A numerical comparison between some of the suggested closures can be found in \cite{guillemaud:2007,lochon:2016}. Among the different possibilities, the original formulation \cite{baer:1986} suggests that the interfacial velocity be taken equal to the velocity of one phase, generally the least compressible one, while the interfacial pressure be taken as the pressure of the other phase. 

Although practical, such simple closures are adapted to specific types of flows and may not be sufficiently general to treat complex flows, which may involve important topology changes and even phase inversions. Other closures for the interfacial quantities have also been proposed. For instance for the velocity, the volume-weighted average $\vel_I=\alpha_1 \vel_1 + \alpha_2 \vel_2$, or the mass-weighted average $\vel_I = Y_1 \vel_1 + Y_2 \vel_2$, where $\vel_k$ is the velocity of the $k^\text{th}$ phase, while $\alpha_k$ and $Y_k$ are its volume fraction and mass fraction respectively. Although for specific regimes, these may not be the most accurate estimations of the true interfacial velocity, they present the advantage of being well-defined over the whole domain regardless of the flow configuration. Therefore, we refer to models with such interfacial quantities as \textit{all-topology} models.

Another aspect to consider when closing the interfacial quantities is their impact on the mathematical structure of the resulting model. When the interfacial quantities are functions of only the dependent variables --- and not their gradients as in \cite{saurel:2003,perrier:2021}--- the model is hyperbolic under some non-resonance condition. However, as these interfacial quantities define the inter-phase momentum and energy exchanges, they appear in the governing equations as non-conservative products. As a consequence, jump conditions for shocks may not be well-defined and different numerical schemes may converge to different solutions, as illustrated in \cite{guillemaud:2007}. For the Riemann problem to be well-posed, first, the characteristic field associated to the interfacial velocity should be linearly degenerate in order for the jump conditions to be uniquely defined. Second, the system's total entropy equation should satisfy a conservation law in order to select admissible weak solutions. The closures which satisfy these conditions for the Baer-Nunziato model have been identified in \cite{gallouet:2004}. In order to obtain a linearly degenerate characteristic field, the interfacial velocity must be chosen as either one of the phasic velocities or as the mass-weighted average velocity.
Using the mass-weighted average velocity as interfacial velocity has been proposed in several contributions, in combination with an interfacial pressure that is usually given by the mixture pressure $p_I = \alpha_1 p_1 + \alpha_2 p_2$ \cite{saurel:1999}, or a mixture total pressure $p_I = \alpha_1 p_1 + \alpha_2 p_2 + \rho Y_1 Y_2 (\vel_1 - \vel_2)^2$ \cite{saurel:2001} --- where $\rho=\alpha_1 \rho_1 + \alpha_2 \rho_2$ is the total density and $\rho_k$, $k=1,2$, are the phasic densities --- which is the pressure Riemann invariant and therefore continuous through interfaces. However, these expressions for the interfacial pressure do not satisfy the second requirement for a well-posed Riemann problem, since they do not lead to a conservation law for the system's entropy. As it turns out, when the interfacial velocity is the mass-weighted average velocity, the only expression for the interfacial pressure that leads to a conservative entropy equation is given by
\begin{equation}
p_I = \frac{Y_2 T_2 p_1 + Y_1 T_1 p_2}{Y_2 T_2 + Y_1 T_1},
\label{eq:p_I_BN}
\end{equation}
where $p_k$ and $T_k$ are respectively the pressure and temperature of the $k^\text{th}$ phase \cite{gallouet:2004}. However, since the interfacial pressure describes a momentum exchange due to an imbalance of forces at the interface, it is related to mechanical effects rather than thermal ones. As such, it is not expected to depend on the phasic temperatures, and to the best of our knowledge, there is no clear physical interpretation for expression \eqref{eq:p_I_BN}, which is obtained through purely mathematical considerations by enforcing an entropy conservation law. As such, the class of Baer-Nunziato models does not provide an \textit{all-topology} model which has both the required mathematical properties to treat shocks, and physical consistency.

An alternative \textit{all-topology} compressible two-fluid description \cite{romenski:2004, romenski:2007, romenski:2010} has been derived by means of the theory of thermodynamically compatible systems \cite{godunov:1961, godunov:1996, romensky:1998, ferrari:2025}. In its barotropic version, the model has been extensively studied \cite{thein:2022a,thein:2022b}, while an alternative derivation, and its extension to include velocity fluctuations, through Stationary Action Principle can be found in \cite{haegeman_RelVel:2025}. In the mono-dimensional setting, the barotropic model is equivalent to a barotropic Baer-Nunziato model with closures $\vel_I = Y_1\vel_1 +Y_2\vel_2$ and $p_I = Y_2 p_1 + Y_1 p_2$, but expressed in terms of total momentum, $\rho \vel = \alpha_1\rho_1 \vel_1 + \alpha_2\rho_2\vel_2$ --- notice that here $\vel=\vel_I$ --- and the relative velocity, $\bm{W}=\vel_1-\vel_2$, instead of the phasic momenta $\alpha_k\rho_k\vel_k$, $k=1,2$. In the multi-dimensional setting, compared to the Baer-Nunziato model, additional lift-like forces appear, as well as the involution constraint $\rot(\bm{W})=\bm{0}$. However, as discussed in \cite{haegeman_RelVel:2025}, the main challenge related to this model is its extension to full thermodynamics, i.e. to non-barotropic flows. Several versions of the model which include full thermodynamics have been proposed, as either single- or two-temperature models \cite{romenski:2004,romenski:2007,romenski:2010}, but all share the same type of underlying assumption, which is the conservation of entropy with mixture velocity $\vel$. For smooth solutions of the Euler equations, the entropy density is conserved by the flow. Extending this to two-fluid models, one may assume the conservation of the phasic entropy density, $\alpha_k \rho_k s_k$, $k=1,2$, following its own velocity $\vel_k$. This then leads to the following equation for the total entropy density, $\rho s = \alpha_1 \rho_1 s_1 + \alpha_2 \rho_2 s_2$, written in terms of mixture velocity $\vel$ and relative velocity $\bm{W}$,
\begin{equation}
\partial_t\pth{\rho s} + \nabla\cdot\pth{\rho s \vel + \rho Y_1 Y_2 (s_1 - s_2)\bm{W}} = 0.
\label{eq:entropy_eq_u_W}
\end{equation}
In the framework of thermodynamically compatible systems, phasic entropies are assumed to be conserved in the frame associated to the mixture velocity instead of their own velocity. This leads to a total entropy equation for which the entropy flux proportional to $\bm{W}$ in \eqref{eq:entropy_eq_u_W} is absent \cite{romenski:2004,romenski:2007,romenski:2010,laspina:2017}. Consequently, non-zero relative velocity induces an entropy exchange between the phases which induces an artificial heat flux in the total energy equation, which then writes 
\begin{equation}
\partial_t\pth{\alpha_1\rho_1 E_1 + \alpha_2 \rho_2 E_2} + \nabla\cdot\pth{\alpha_1\rho_1 H_1 \vel_1 + \alpha_2 \rho_2 H_2 \vel_2 - \rho Y_1 Y_2 (s_1 T_1 - s_2 T_2)\bm{W}} = 0,\label{eq:artificial_heat_SHTC}
\end{equation}
where $E_k=e_k+\vel_k^2/2$, and $H_k=h_k+\vel_k^2/2$, are respectively the specific total energy and specific total enthalpy of the $k^\text{th}$ phase, with $e_k$, and $h_k=e_k + p_k/\rho_k$, its specific internal energy and specific enthalpy. Note that no entropy production is associated to this heat flux term and that it is not related to any heat diffusion phenomenon. Moreover, the framework then also leads to the appearance of non-conservative products proportional to the phasic temperature gradients in the phasic momentum equations and for which there is no clear physical interpretation. As a result, the quest for an \textit{all-topology} two-fluid model, which has both the mathematical properties required to ensure its well-posedness and to properly treat shocks, as well as the physical consistency that allows for a clear interpretation of its various terms, does not seem to have received a conclusive answer yet. 

In this contribution, we propose a novel \textit{all-topology} two-fluid model derived through Hamilton's Stationary Action Principle. Compared to the variational principle used in \cite{haegeman_RelVel:2025}, which relies on a single family of trajectories, here the variational principle relies on two families of trajectories, one for each phase. Whereas previous contributions using two families of trajectories, derived single-pressure models \cite{bedford:1978,geurst:1986}, or models with the interfacial velocity equal to the velocity of one of the phases \cite{gavrilyuk:2002}, here we propose a novel derivation of an original two-pressure model which considers the mixture velocity as interfacial velocity. The resulting model is fully closed and the resulting expressions for the interfacial quantities recover previously used and physically sound expressions. Moreover, a new interfacial quantity, representing the inter-phase work produced at the interface, appears, and is coined interfacial work accordingly. The model is analysed in the one-dimensional setting, under a non-resonance condition --- which is classical for such two-fluid models \cite{ransom:1984,gallouet:2004} --- the model is hyperbolic and symmetrizable, which ensures its local well-posedness. Furthermore, owing to the linearly degenerate structure of the interfacial wave, its non-conservative products are well-defined for weak solutions and thus the model admits a unique set of jump conditions. The conservation law satisfied by the total entropy allows to select admissible shocks in agreement with the second principle of thermodynamics. As such, the model we propose connects with both the mathematical modelling approach, which relies on PDE analysis tools, and the mechanical modelling approach, which seeks clear physical interpretations of the various terms.

In the multi-dimensional setting, an additional force, akin to lift, appears in the phasic momentum equations and is a direct consequence of the imposed interfacial velocity in the Hamiltonian Principle. Its expression implies the relative velocity between the two phases but also a new potential velocity field which has its own evolution equation, and is generated by the pressure non-equilibrium between the two phases. The presence of lift forces has also been obtained in the thermodynamically compatible two-phase models \cite{romenski:2004, romenski:2007, romenski:2010}, and in the Hamiltonian models found in \cite{gavrilyuk:2020}, but are not yet well understood. We compare their expression in both formulations but the proper modelling of such effects requires further studies.

The outline of this contribution is as follows. We end this section by specifying some notations and vector analysis conventions. Next, in Section \ref{sec:SAP_framework}, the two-trajectory Stationary Action Principle framework is presented. Then, in Section \ref{sec:Model_derivation}, we derive and discuss the new model. Its mathematical analysis in the one-dimensional setting is conducted in Section \ref{sec:Model_analysis}, some comments on the multi-dimensional setting are also provided. Final remarks and conclusions are presented in Section \ref{sec:Conclusions}. Appendices \ref{app:SAP_proof} and \ref{app:derivation_eulerienne} provide supplementary details concerning the modelling framework.\\

We adopt the following conventions. Vectors of $\mathbb{R}^d$ are represented in column form, and the identity matrix is denoted $\Id$. The derivative of a scalar field with respect to a vector field is its differential and is represented by a row-vector, its gradient is the transpose of its differential and is a column-vector. Similarly, the derivative of a vector with respect to another vector is its Jacobian matrix. The gradient of a vector field is defined as the transpose of its Jacobian matrix. The divergence of a matrix is the column vector obtained by applying the divergence operator to each row-vector of the matrix.

For a phasic velocity field $\vel_k$, the associated material derivative is denoted $\Dt^k$, i.e. $\Dt^k\bullet = \partial_t\bullet + (\vel_k\cdot\nabla)\bullet$. The mass-weighted averaged velocity $Y_1\vel_1 + Y_2 \vel_2$ is denoted $\vel$ as indicated previously, it will also be referred to as mixture velocity. The material derivative operator defined by this velocity is simply denoted $\Dt$. Interfacial velocities are denoted $\vel_I$, when the interfacial velocity equals the mixture velocity, its index may be dropped without ambiguity.

\section{The Stationary Action Principle with two families of trajectories}\label{sec:SAP_framework}

Hamilton's Stationary Action Principle allows for a variational derivation of equations of motion under the reversibility assumption. Its application to fluid mechanics allows to derive the equations for ideal fluids in both Lagrangian and Eulerian coordinates \cite{bretherton:1970,salmon:1988,serre:1993}.
The Lagrangian derivations are based on the introduction of fluid particle trajectories $\bm{x}=\Traj(t,\bm{X})$, where $\bm{x}$ and $\bm{X}$ are the Eulerian and Lagrangian coordinates respectively. The perturbations of the different flow variables are then related to perturbations of the trajectories and naturally account for constraints such as mass conservation. 

A single-velocity generic Stationary Action Principle framework has been presented in \cite{haegeman_GenSAP:2025} and used for the derivation of the hyperbolic model for heat transfer presented in \cite{dhaouadi:2024}. Extension to models without velocity equilibrium has been achieved in \cite{haegeman_RelVel:2025} by assuming the single trajectory to be a virtual trajectory associated to the mass-weighted average velocity and by adding the relative velocity as an internal variable associated to phasic mass conservation constraints. This has allowed for an original derivation of the barotropic version of the two-phase flow model originally obtained through the theory of thermodynamically compatible systems in \cite{romenski:2004, romenski:2007, romenski:2010}, as well as its extension to include pseudo-turbulence. However, as mentioned in the introduction, the lifting of the barotropic assumption in this single-trajectory framework leads to artificial heat exchanges.

The introduction of two families of trajectories, $\Traj_1$ and $\Traj_2$, has allowed for the application of the variational principle to fluid mixtures in the context of two-phase flows. Single-pressure models, classically obtained through phase averaging, have been obtained by application of the Stationary Action Principle to averaged energy densities \cite{bedford:1978}. Interactions between the velocity fields such as added-mass effects have also been considered \cite{geurst:1986}. Two-pressure models have also been derived \cite{gavrilyuk:2002} using one of the phasic velocities to govern the volume fraction dynamics. 

In this section, we extend the generic framework presented in \cite{haegeman_GenSAP:2025} into a two-trajectories framework. To do so, we consider a generic Lagrangian density $\Lag$, which is assumed of the following form,
\begin{equation}\label{eq:generic_Lagrangian}
\Lag = \Lag_1\pth{m_1,\vel_1,b_{1,i},b_{2,i}, \xi_j,\nabla\xi_j,\Dt^1\xi_j} +  \Lag_2\pth{m_2,\vel_2,b_{2,i},b_{1,i}, \xi_j,\nabla\xi_j,\Dt^2\xi_j}.
\end{equation}
Here, for $k=1,2$, $\vel_k$ denotes the velocity associated to the trajectory $\Traj_k$, while $m_k$ is the partial mass that is conserved along this trajectory. Variables $b_{k,i}$, $i=1,\ldots,n_k$ are transported at velocity $\vel_k$, they allow to represent constraints, which may be expressed as transport equations. As such, $b_{k,i}$ may play the role of a phasic entropy, mass fractions in the context of multi-component phases, turbulent entropies \cite{saurel:2003,herard:2003,haegeman_RelVel:2025}, \textit{etc}.  Finally, variables $\xi_j$, $j=1\ldots,m$, are \textit{free} variables in the sense that they are not related to any of the trajectories as they will have their own independent variations. These may represent coupling variables, sub-scale phenomena such as micro-inertia \cite{gavrilyuk:2002}, or be related to internal constraints.

The Stationary Action Principle then dictates that the equations of motion are critical points of the Action $\Act$, which is the space-time integral of the Lagrangian density $\Lag$. As such, the generic equations are determined by
\begin{equation}\label{eq:stationary_action}
\delta\Act = \delta\txint \Lag \mathrm{d}\bm{x}\mathrm{d}t = 0,
\end{equation}
subjected to the following constraints, related to the conservation of the partial masses,
\begin{subequations}\label{eq:generic_system_constraints}
\begin{align}
\partial_t m_1 + \nabla\cdot\pth{m_1\vel_1} &= 0,\\
\partial_t m_2 + \nabla\cdot\pth{m_2\vel_2} &= 0,
\end{align}
and the transport of the advected quantities,
\begin{align}
\Dt^1 b_{1,i} &= 0, \qquad i=1,\ldots,n_1,\\
\Dt^2 b_{2,i} &= 0, \qquad i=1,\ldots,n_2.
\end{align}
\end{subequations}
The space of perturbations is then parametrized by two one-parameter families of perturbed trajectories $\Traj_k(t,\bm{X},\varepsilon)$. These then define virtual displacements, $\delta\Traj_k(t,\bm{x})$, that we express in Eulerian coordinates. Perturbations of the velocities $\delta\vel_k$, partial masses $\delta m_k$, and advected quantities $\delta b_{k,i}$ are then related to the virtual motions  $\delta\Traj_k$, through the following relations,
\begin{subequations}\label{eq:SAP_variations}
\begin{align}
\delta\vel_k &= \Dt^k\pth{\delta\Traj_k} - \pth{\nabla\vel_k}^\top\delta\Traj_k, \\
\delta m_k &= -\nabla\cdot\pth{m_k\delta\Traj_k},\\
\delta b_{k,i} &= -\pth{\delta\Traj_k}^\top\nabla b_{k,i}.
\end{align}
These perturbations encode the constraints \eqref{eq:generic_system_constraints}, and their expressions are classical, see for instance \cite{bretherton:1970,gavrilyuk:2002,dhaouadi:2024,haegeman_GenSAP:2025}.

Variables $\xi_j$ are assumed to have independent variations $\delta\xi_j$. Commuting the $\delta$ operator with space and time derivatives, then yields the following expressions for the remaining variations,
\begin{align}
\delta\pth{\nabla\xi_j} &= \nabla\pth{\delta\xi_j},\\
\delta\pth{\Dt^k\xi_j} &= \Dt^k\pth{\delta\xi_j} + \pth{\nabla\xi_j}^\top\delta\vel_k.
\end{align}
\end{subequations}
Finally, following the Hamiltonian principle according to which all boundary points are kept fixed, the perturbations are assumed to be with compact support in the space-time domain. We then obtain the following results, proofs of which are reported in appendix \ref{app:SAP_proof}.

\begin{theorem}[The generic system]\label{thrm:SAP}
The Stationary Action Principle \eqref{eq:stationary_action}, applied to the generic Lagrangian \eqref{eq:generic_Lagrangian}, with variations \eqref{eq:SAP_variations}, corresponding to the constraints \eqref{eq:generic_system_constraints}, yields the following evolution equations,
\begin{subequations}\label{eq:generic_system_Euler_Lagrange}
\begin{equation}\label{eq:generic_system_K1_eq}
\begin{split}
\partial_t \bm{K}_1 + \nabla\cdot\pth{\bm{K}_1\otimes\vel_1 + \bm{\Pi}_1} =&  -\sum_{j=1}^{m} \brck{\partial_t\mathscr{M}_{1,j}+\nabla\cdot\pth{\mathscr{M}_{1,j}\vel_1} + \nabla\cdot\bm{D}_{1,j} - \dfrac{\partial\Lag_1}{\partial\xi_j}}\nabla\xi_j\\
&+ \sum_{i=1}^{n_2}\dfrac{\partial\Lag_1}{\partial b_{2,i}}\nabla b_{2,i} - \sum_{i=1}^{n_1}\dfrac{\partial\Lag_2}{\partial b_{1,i}}\nabla b_{1,i} ,
\end{split}
\end{equation}
\begin{equation}\label{eq:generic_system_K2_eq}
\begin{split}
\partial_t \bm{K}_2 + \nabla\cdot\pth{\bm{K}_2\otimes\vel_2 + \bm{\Pi}_2} =&  -\sum_{j=1}^{m} \brck{\partial_t\mathscr{M}_{2,j}+\nabla\cdot\pth{\mathscr{M}_{2,j}\vel_2} + \nabla\cdot\bm{D}_{2,j} - \dfrac{\partial\Lag_2}{\partial\xi_j}}\nabla\xi_j\\
&- \sum_{i=1}^{n_2}\dfrac{\partial\Lag_1}{\partial b_{2,i}}\nabla b_{2,i} + \sum_{i=1}^{n_1}\dfrac{\partial\Lag_2}{\partial b_{1,i}}\nabla b_{1,i} ,
\end{split}
\end{equation}
and
\begin{equation}\label{eq:generic_system_xi_eq}
\partial_t \pth{\mathscr{M}_{1,j} + \mathscr{M}_{2,j}} + \nabla\cdot\pth{\mathscr{M}_{1,j}\vel_1 + \mathscr{M}_{2,j}\vel_2 + \bm{D}_{1,j}+\bm{D}_{2,j}}  - \frac{\partial\Lag_1}{\partial\xi_j} - \frac{\partial\Lag_2}{\partial\xi_j}= 0,
\end{equation}
\end{subequations}
for $j=1,\ldots,m$. Here, $\bm{K}_1$ and $\bm{K}_2$ are the generalized partial momenta, while $\bm{\Pi}_1$ and $\bm{\Pi}_2$ are their associated generalized pressure tensors, they are defined as
\begin{subequations}
\begin{equation}
\bm{K}_k = \pth{\dfrac{\partial\Lag_k}{\partial\vel_k}}^\top, \qquad \bm{\Pi}_k=\pth{\Lag_k- m_k\dfrac{\partial\Lag_k}{\partial m_k}}\Id - \sum_{j=1}^m\nabla\xi_j\otimes\bm{D}_{k,j} ,\qquad k=1,2,
\end{equation}  
The Euler-Lagrange equations related to the free variables are expressed through their conjugate variables,
\begin{equation}
\mathscr{M}_{k,j} = \dfrac{\partial\Lag_k}{\partial\pth{\Dt^k\xi_j}}, \qquad \bm{D}_{k,j} = \pth{\dfrac{\partial\Lag_k}{\partial\pth{\nabla\xi_j}}}^\top,\qquad j=1,\ldots,m,\quad k=1,2.
\end{equation}
\end{subequations}
\end{theorem}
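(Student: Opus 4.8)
The plan is to compute the first variation $\delta\Act=\txint\delta\Lag\,\mathrm{d}\bm{x}\,\mathrm{d}t$ directly in Eulerian coordinates, insert the kinematic relations \eqref{eq:SAP_variations}, integrate by parts in space and time so as to transfer every derivative off the perturbations, and then exploit that $\delta\Traj_1$, $\delta\Traj_2$ and the $\delta\xi_j$ are independent and compactly supported. All boundary contributions vanish by compact support, so by the fundamental lemma of the calculus of variations $\delta\Act=0$ is equivalent to the vanishing of the integrand coefficient of each perturbation separately; these three coefficients give \eqref{eq:generic_system_K1_eq}, \eqref{eq:generic_system_K2_eq} and \eqref{eq:generic_system_xi_eq} respectively. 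Expanding $\delta\Lag=\delta\Lag_1+\delta\Lag_2$ by the chain rule, I would sort the resulting terms according to the perturbation that carries them, noting that $\delta m_k$ and $\delta b_{k,i}$ are tied to $\delta\Traj_k$, that the advected quantities $b_{k,i}$ enter \emph{both} $\Lag_1$ and $\Lag_2$, and that the coupling $(\nabla\xi_j)^\top\delta\vel_k$ hidden inside $\delta(\Dt^k\xi_j)$ feeds the $\delta\Traj_k$ channel rather than the $\delta\xi_j$ channel.

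I would dispatch the $\delta\xi_j$ channel first, as it is the most direct. Only $\delta(\nabla\xi_j)=\nabla(\delta\xi_j)$ and the $\Dt^k(\delta\xi_j)$ part of $\delta(\Dt^k\xi_j)$ carry derivatives, so one integration by parts in space on the $\bm{D}_{k,j}$ terms and one in space-time on each $\mathscr{M}_{k,j}$ term turn the coefficient of $\delta\xi_j$ into $\partial_t(\mathscr{M}_{1,j}+\mathscr{M}_{2,j})+\nabla\cdot(\mathscr{M}_{1,j}\vel_1+\mathscr{M}_{2,j}\vel_2+\bm{D}_{1,j}+\bm{D}_{2,j})-\partial_{\xi_j}\Lag_1-\partial_{\xi_j}\Lag_2$, which upon setting it to zero is exactly \eqref{eq:generic_system_xi_eq}.

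The momentum channels carry the real work; I would treat $\delta\Traj_1$ and obtain $\delta\Traj_2$ by the symmetry $1\leftrightarrow2$. First I would combine the \emph{classical} pair $\partial_{m_1}\Lag_1\,\delta m_1+\bm{K}_1\cdot\delta\vel_1$: integrating the mass term by parts produces $m_1\nabla(\partial_{m_1}\Lag_1)$, and the central algebraic step is the chain-rule identity
\[
\nabla\Lag_1=\partial_{m_1}\Lag_1\,\nabla m_1+(\nabla\vel_1)\bm{K}_1+\sum_i\partial_{b_{1,i}}\Lag_1\,\nabla b_{1,i}+\sum_i\partial_{b_{2,i}}\Lag_1\,\nabla b_{2,i}+(\text{terms in }\xi_j),
\]
which lets me rewrite $m_1\nabla(\partial_{m_1}\Lag_1)$ as $-\nabla(\Lag_1-m_1\partial_{m_1}\Lag_1)$ plus the remaining partials of $\Lag_1$. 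The $(\nabla\vel_1)\bm{K}_1$ piece then cancels the transpose term generated by $\delta\vel_1=\Dt^1(\delta\Traj_1)-(\nabla\vel_1)^\top\delta\Traj_1$, leaving the conservative transport $\partial_t\bm{K}_1+\nabla\cdot(\bm{K}_1\otimes\vel_1)$ together with the scalar pressure gradient $\nabla(\Lag_1-m_1\partial_{m_1}\Lag_1)$. The cross-phase bookkeeping then acquires a clean reading: $\delta b_{1,i}=-\nabla b_{1,i}\cdot\delta\Traj_1$ pairs with $\partial_{b_{1,i}}\Lag=\partial_{b_{1,i}}\Lag_1+\partial_{b_{1,i}}\Lag_2$, the $\partial_{b_{1,i}}\Lag_1$ half cancels the matching term supplied by $\nabla\Lag_1$, while $\partial_{b_{1,i}}\Lag_2$ survives as the exchange source $-\sum_i\partial_{b_{1,i}}\Lag_2\,\nabla b_{1,i}$; symmetrically the term $\sum_i\partial_{b_{2,i}}\Lag_1\,\nabla b_{2,i}$ from $\nabla\Lag_1$ finds no partner, since $b_{2,i}$ is not advected by $\vel_1$, and remains as $+\sum_i\partial_{b_{2,i}}\Lag_1\,\nabla b_{2,i}$. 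These are precisely the two inter-phase sums on the right of \eqref{eq:generic_system_K1_eq}, and swapping $1\leftrightarrow2$ flips their signs to give \eqref{eq:generic_system_K2_eq}.

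The step I expect to be the main obstacle is the free-variable coupling $\sum_j\mathscr{M}_{1,j}(\nabla\xi_j)^\top\delta\vel_1$ in the $\delta\Traj_1$ channel, since $\delta\vel_1$ enters here with the same structure as in the $\bm{K}_1$ term, so this contribution acts like an additional momentum $\sum_j\mathscr{M}_{1,j}\nabla\xi_j$ that must be disentangled. After integrating by parts I would use the commutation identity
\[
\Dt^1(\nabla\xi_j)=\nabla(\Dt^1\xi_j)-(\nabla\vel_1)\,\nabla\xi_j,
\]
which encodes the curl-freeness $\nabla\wedge(\nabla\xi_j)=\bm{0}$, so that the term it produces cancels the one coming from the $-(\nabla\vel_1)^\top$ part of $\delta\vel_1$. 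What is left organizes into exactly two pieces: the divergence of $-\sum_j\nabla\xi_j\otimes\bm{D}_{1,j}$, which is absorbed into the definition of $\bm{\Pi}_1$, and $-\sum_j\brck{\partial_t\mathscr{M}_{1,j}+\nabla\cdot(\mathscr{M}_{1,j}\vel_1)+\nabla\cdot\bm{D}_{1,j}-\partial_{\xi_j}\Lag_1}\nabla\xi_j$, whose bracket I recognize as the phase-$1$ restriction of the $\xi_j$ Euler--Lagrange residual and whose $\partial_{\xi_j}\Lag_1\,\nabla\xi_j$ contribution is furnished by the leftover $\xi$-partials of $\nabla\Lag_1$. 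Collecting every coefficient of $\delta\Traj_1$ and setting it to zero then yields \eqref{eq:generic_system_K1_eq}. I would carry the whole momentum computation out in index notation to keep the transposes, the outer products $\bm{K}_1\otimes\vel_1$ and $\nabla\xi_j\otimes\bm{D}_{1,j}$, and the divergence convention unambiguous, and I would cross-check that the $\nabla\xi_j$-weighted bracket matches \eqref{eq:generic_system_xi_eq} restricted to phase $1$, which is a useful internal consistency test.
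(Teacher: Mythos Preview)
Your proposal is correct and follows essentially the same route as the paper's proof in Appendix~\ref{app:SAP_proof}: expand $\delta\Act$ by the chain rule, integrate by parts to strip derivatives off the compactly supported perturbations, sort the resulting integrand by $\delta\Traj_1$, $\delta\Traj_2$, $\delta\xi_j$, and identify each coefficient. The paper performs the same cancellations you anticipate --- the $(\nabla\vel_k)\bm{K}_k$ term from $\nabla\Lag_k$ against the transpose piece of $\delta\vel_k$, the commutation $\Dt^k(\nabla\xi_j)=\nabla(\Dt^k\xi_j)-(\nabla\vel_k)\nabla\xi_j$ for the $\mathscr{M}_{k,j}\nabla\xi_j$ coupling, and the identity $(\nabla\nabla\xi_j)\bm{D}_{k,j}=\nabla\cdot(\nabla\xi_j\otimes\bm{D}_{k,j})-(\nabla\cdot\bm{D}_{k,j})\nabla\xi_j$ to split off the $\bm{\Pi}_k$ contribution --- only it first assembles the raw momentum equation \eqref{A:raw_K_eq} and then injects $\nabla\Lag_k$, whereas you invoke the chain-rule identity for $\nabla\Lag_1$ earlier in the computation; this is a matter of ordering, not of substance.
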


\begin{remark}
The right-hand sides of eq. \eqref{eq:generic_system_K1_eq} and eq. \eqref{eq:generic_system_K2_eq} represent inter-phase momentum exchanges. These terms are in general not zero. However, the generic system ensures the conservation of the total momentum, $\bm{K}_1+\bm{K}_2$, in agreement with Noether's theorem and thus  when summing eqs. \eqref{eq:generic_system_K1_eq} and \eqref{eq:generic_system_K2_eq}, their right-hand sides cancel each other out owing to eq. \eqref{eq:generic_system_xi_eq}.
\end{remark}

\begin{remark}\label{rmrk:constraints}
When $\Lag_1$ and $\Lag_2$ do not depend on $\Dt^1\xi_j$ and $\Dt^2\xi_j$, eq. \eqref{eq:generic_system_xi_eq} yields an algebraic constraint instead of an evolution equation.
\end{remark}

For convenience, in the following, we will denote by $\Bset_1=\{b_{1,i},\;i=1,\ldots,n_1\}$ and $\Bset_2=\{b_{2,i},\;i=1,\ldots,n_2\}$, the sets of transported variables, while $\Xiset =\{\xi_j,\;j=1,\ldots,m\}$ will denote the set of free variables.

As the generic Lagrangian \eqref{eq:generic_Lagrangian} does not explicitly depend on the time variable, the system's total energy, also called Hamiltonian, is conserved as specified by the following result.

\begin{theorem}[Conservation of the Hamiltonian]\label{thrm:Hamiltonian}
For $k=1,2$, define the partial Hamiltonian $\Ham_k$ as follows,
\begin{equation}\label{eq:generic_partial_hamiltonian_def}
\Ham_k = \bm{K}_k^\top\vel_k + \sum_{j=1}^m \mathscr{M}_{k,j}\Dt^k\xi_j - \Lag_k.
\end{equation}
The generic system composed of eqs. \eqref{eq:generic_system_constraints} and eqs. \eqref {eq:generic_system_Euler_Lagrange} admits a supplementary conservation law satisfied by the Hamiltonian, $\Ham=\Ham_1 + \Ham_2$, which writes
\begin{equation}\label{eq:generic_hamiltonian_eq}
\partial_t\Ham + \nabla\cdot\pth{\Ham_1\vel_1 +  \Ham_2\vel_2 + \bm{\Pi}_1^\top\vel_1 +\bm{\Pi}_2^\top\vel_2 + \sum_{j=1}^m\pth{\bm{D}_{1,j}\Dt^1\xi_j+\bm{D}_{2,j}\Dt^2\xi_j}} = 0.
\end{equation}
The equation for the partial Hamiltonian $\Ham_k$ writes
\begin{multline}\label{eq:generic_partial_hamiltonian_eq}
\partial_t\Ham_k + \nabla\cdot\pth{\Ham_k\vel_k + \bm{\Pi}_k^\top\vel_k+\sum_{j=1}^m\bm{D}_{k,j}\Dt^k\xi_j} = \sum_{i=1}^{n_k}\dfrac{\partial\Lag_{k'}}{\partial b_{k,i}}\partial_t b_{k,i} - \sum_{i=1}^{n_{k'}}\dfrac{\partial\Lag_{k}}{\partial b_{k',i}}\partial_t b_{k',i} \\
+\sum_{j=1}^{m} \brck{\partial_t\mathscr{M}_{k,j}+\nabla\cdot\pth{\mathscr{M}_{k,j}\vel_k} + \nabla\cdot\bm{D}_{k,j} - \dfrac{\partial\Lag_k}{\partial\xi_j}}\partial_t\xi_j,
\end{multline}
with $k'=3-k$, and $k=1,2$.
\end{theorem}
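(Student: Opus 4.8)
My plan is to \emph{assume} the evolution equations of Theorem~\ref{thrm:SAP} together with the constraints~\eqref{eq:generic_system_constraints}, to establish the partial balance~\eqref{eq:generic_partial_hamiltonian_eq} by direct computation, and then to recover~\eqref{eq:generic_hamiltonian_eq} by summation over $k=1,2$. Conceptually, the Hamiltonian is conserved because $\Lag$ has no explicit time dependence, so~\eqref{eq:generic_hamiltonian_eq} is the Noether current of time translations; since the admissible perturbations are tied to the trajectories rather than free, I would realise this invariance concretely through the chain rule rather than invoke the symmetry abstractly.

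First I would differentiate the definition~\eqref{eq:generic_partial_hamiltonian_def} of $\Ham_k$ in time and expand $\partial_t\Lag_k$ by the chain rule over every argument of $\Lag_k$. Using the constitutive identities $\bm{K}_k=\pth{\partial\Lag_k/\partial\vel_k}^\top$, $\mathscr{M}_{k,j}=\partial\Lag_k/\partial\pth{\Dt^k\xi_j}$ and $\bm{D}_{k,j}=\pth{\partial\Lag_k/\partial\pth{\nabla\xi_j}}^\top$, the conjugate pairings $\bm{K}_k^\top\partial_t\vel_k$ and $\sum_j\mathscr{M}_{k,j}\,\partial_t\pth{\Dt^k\xi_j}$ cancel between $\partial_t\Ham_k$ and $-\partial_t\Lag_k$. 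This leaves $\partial_t\Ham_k$ written through $\vel_k\cdot\partial_t\bm{K}_k$, the term $\sum_j\pth{\partial_t\mathscr{M}_{k,j}}\Dt^k\xi_j$, and the remaining partial derivatives of $\Lag_k$ with respect to $m_k$, the advected variables and $\xi_j,\nabla\xi_j$, each contracted with the corresponding time derivative.

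Next I would substitute the equations of motion: \eqref{eq:generic_system_K1_eq}--\eqref{eq:generic_system_K2_eq} for $\partial_t\bm{K}_k$, the mass law and transport constraints~\eqref{eq:generic_system_constraints} for $\partial_t m_k$ and $\partial_t b_{k,i}$, and the definition of the bracket in~\eqref{eq:generic_system_xi_eq} to replace $\partial_t\mathscr{M}_{k,j}$. All spatial terms should then assemble into $\nabla\cdot\pth{\Ham_k\vel_k+\bm{\Pi}_k^\top\vel_k+\sum_j\bm{D}_{k,j}\Dt^k\xi_j}$ by repeated product-rule manipulations and the identity $\nabla\cdot\pth{\bm{\Pi}_k^\top\vel_k}=\vel_k\cdot\pth{\nabla\cdot\bm{\Pi}_k}+\bm{\Pi}_k:\nabla\vel_k$. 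The decisive point is that the pressure-work term $\bm{\Pi}_k:\nabla\vel_k$, the mass contribution $\pth{\partial\Lag_k/\partial m_k}\nabla\cdot\pth{m_k\vel_k}$ and the spatial chain rule $\vel_k\cdot\nabla\Lag_k$ coming out of $\nabla\cdot\bm{\Pi}_k$ cancel exactly, once the definition of $\bm{\Pi}_k$ and the advection relation $\vel_k\cdot\nabla b_{k,i}=-\partial_t b_{k,i}$ are used; and that, writing $\Dt^k\xi_j=\partial_t\xi_j+\vel_k\cdot\nabla\xi_j$ throughout, the free-variable residuals pair with $\partial_t\xi_j$ to reconstitute the bracket $\partial_t\mathscr{M}_{k,j}+\nabla\cdot\pth{\mathscr{M}_{k,j}\vel_k}+\nabla\cdot\bm{D}_{k,j}-\partial\Lag_k/\partial\xi_j$ multiplying $\partial_t\xi_j$, which is the last sum in~\eqref{eq:generic_partial_hamiltonian_eq}. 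The two $b$-source sums arise differently: $\sum_i\pth{\partial\Lag_{k'}/\partial b_{k,i}}\partial_t b_{k,i}$ comes from the cross-coupling in the momentum flux contracted with $\vel_k$ combined with the transport of $b_{k,i}$ at $\vel_k$, while $-\sum_i\pth{\partial\Lag_k/\partial b_{k',i}}\partial_t b_{k',i}$ is the bare chain-rule derivative of $\Lag_k$ with respect to the other phase's advected variables.

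Finally, summing~\eqref{eq:generic_partial_hamiltonian_eq} over $k=1,2$ with $k'=3-k$, the two $b$-source sums cancel by their anti-symmetry under $k\leftrightarrow k'$, and the bracketed $\xi$-sources cancel because their sum over $k$ is exactly the free-variable equation~\eqref{eq:generic_system_xi_eq}; the whole right-hand side vanishes and~\eqref{eq:generic_hamiltonian_eq} follows. I expect the main obstacle to be the bookkeeping of the third paragraph: checking that the pressure-work, mass and $\bm{D}_{k,j}$/$\mathscr{M}_{k,j}$ contributions recombine \emph{exactly} into the stated flux and the bracketed source with no leftover non-divergence term. This is routine but delicate algebra in which the transpose and sign conventions fixed at the start of the paper, and the anisotropic part $-\sum_j\nabla\xi_j\otimes\bm{D}_{k,j}$ of $\bm{\Pi}_k$, must be tracked with care.
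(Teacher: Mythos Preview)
Your proposal is correct and follows the same overall strategy as the paper: derive the partial balance~\eqref{eq:generic_partial_hamiltonian_eq} by chain-ruling through the Lagrangian and substituting the equations of motion, then sum over $k$ and use the anti-symmetry of the $b$-sources together with~\eqref{eq:generic_system_xi_eq} to annihilate the right-hand side.

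The only notable difference is organisational. The paper does not differentiate $\Ham_k$ directly with $\partial_t$; instead it computes $m_k\Dt^k\pth{\Ham_k/m_k}$ and, in parallel, expands $m_k\Dt^k\pth{\Lag_k/m_k}$ by the chain rule in the material frame. Because mass conservation gives $m_k\Dt^k\pth{\phi/m_k}=\partial_t\phi+\nabla\cdot\pth{\phi\vel_k}$, this packaging makes the convective flux $\Ham_k\vel_k$ appear automatically and isolates the remaining terms as five blocks $T_1,\ldots,T_5$ that simplify individually. Your Eulerian $\partial_t$ route is equally valid and arguably more elementary, but you pay for it with the extra product-rule step that reconstitutes $\nabla\cdot\pth{\Ham_k\vel_k}$, and the ``pressure-work'' cancellation you flag as delicate is precisely what the paper's material-derivative organisation sidesteps. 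Either way the algebra is the same in content; the paper's choice just front-loads the identity that turns specific-quantity material derivatives into conservative divergences.
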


We conclude this section by showing how the present framework may be applied to derive the Baer-Nunziato two-phase flow model with its original closures  \cite{baer:1986}. We consider two compressible and immiscible fluids, each equipped with a complete equation of state, given by a strictly concave function $(\tau_k, e_k)\mapsto s_k$, with $\tau_k=1/\rho_k$, the specific volume. The Gibbs identity, $T_k\mathrm{d}s_k=\mathrm{d}e_k + p_k\mathrm{d}\tau_k$, then defines the phasic temperature, $T_k>0$, and the phasic pressure, $p_k$. The $k^\text{th}$ phase's occupancy rate is given by its volume fraction $\alpha_k\in(0,1)$, which satisfies the saturation constraint $\alpha_1+\alpha_2=1$. The Lagrangian is given by the difference between the kinetic energy and the potential energy which, in the absence of external forces reduces to the internal thermodynamic energy. Hence, we obtain
\begin{equation}\label{eq:two-phase_Lagrangian}
\Lag = \half m_1 \vel_1^2 - m_1 e_1\pth{\dfrac{m_1}{\alpha_1},s_1} + \half m_2 \vel_2^2 - m_2 e_2\pth{\dfrac{m_2}{1-\alpha_1},s_2},
\end{equation}
where $m_k=\alpha_k\rho_k$ is the partial mass of the $k^\text{th}$ phase. In the absence of heat exchange and under the reversibility assumption, fluid particles do not exchange entropy, nor is there any entropy creation, as such each fluid particle is assumed to carry a fixed amount of entropy. Moreover, following the original closures, we impose the interfacial velocity $\vel_I=\vel_1$ for the volume fraction transport. As a result, we consider the Lagrangian \eqref {eq:two-phase_Lagrangian} with $\Bset_1 = \{\alpha_1, s_1\}$, $\Bset_2 = \{s_2\}$, and $\Xiset = \emptyset$. Application of Theorem \ref{thrm:SAP} then yields the phasic momentum equations, while application of Theorem \ref{thrm:Hamiltonian} allows to exchange the phasic entropy equations for the phasic total energy equations as $\Ham_k=\alpha_k\rho_k E_k$. The complete system that results then writes
\begin{subequations}
\begin{align}
\partial_t\alpha_1 + \vel_1\cdot\nabla\alpha_1 &=0, \label{eq:BN_vol_frac_transport}\\
\partial_t\pth{\alpha_1 \rho_1} + \nabla\cdot\pth{\alpha_1 \rho_1 \vel_1} &=0,\\
\partial_t\pth{\alpha_2 \rho_2} + \nabla\cdot\pth{\alpha_2 \rho_2 \vel_2} &=0,\\
\partial_t\pth{\alpha_1\rho_1\vel_1} + \nabla\cdot\pth{\alpha_1\rho_1\vel_1\otimes\vel_1 + \alpha_1 p_1\Id} - p_2\nabla\alpha_1 &=0,\\
\partial_t\pth{\alpha_2\rho_2\vel_2} + \nabla\cdot\pth{\alpha_2\rho_2\vel_2\otimes\vel_2 + \alpha_2 p_2\Id} + p_2\nabla\alpha_1 &=0,\\
\partial_t\pth{\alpha_1\rho_1 E_1} + \nabla\cdot\pth{\alpha_1\rho_1 H_1\vel_1} - p_2\vel_1\cdot\nabla\alpha_1 &=0,\\
\partial_t\pth{\alpha_2\rho_2 E_2} + \nabla\cdot\pth{\alpha_2\rho_2 H_2\vel_2} + p_2\vel_1\cdot\nabla\alpha_1 &=0.
\end{align}
\end{subequations}
This is exactly the Baer-Nunziato system \cite{baer:1986} with interfacial velocity $\vel_I=\vel_1$ and interfacial pressure $p_I=p_2$. It is important to note that here, only the choice of $\vel_1$ as interfacial velocity is imposed as a constraint, since $\alpha_1\in\Bset_1$, however, the corresponding closure for the interfacial pressure results directly from the variational principle.

Much of the physical information about the system is contained within the Lagrangian, yet, the constraints associated to the variables play an equally important role. For instance, if we change the volume fraction constraint and set it to be a free variable, i.e. $\alpha_1 \in\Xiset$, instead of eq. \eqref{eq:BN_vol_frac_transport}, we obtain an algebraic constraint in accordance with Remark \ref{rmrk:constraints}. This constraint reads $p_1=p_2$, and results in a single-pressure model that is non-hyperbolic.

We now move on to the next section in which an all-topology two-fluid model is derived.

\section{An all-topology two-phase flow model}\label{sec:Model_derivation}

\subsection{Derivation of the model}

We consider the same physical setting as used in the previous section to derive the Baer-Nunziato model through Stationary Action Principle. In order to derive an all-topology two-fluid model, the constraint on the volume fraction must be modified as it leads to the interfacial velocity to be equal to one of the phasic velocities.

Following \cite{saurel:1999,saurel:2001}, we consider the mass-weighted average velocity, $\vel=Y_1\vel_1+ Y_2\vel_2$, as interfacial velocity. When drag is considered, this is the velocity to which the phasic velocities relax \cite{saurel:1999}. It is also the choice that is made in the two-phase flow models derived by the theory of thermodynamically compatible systems \cite{romenski:2004,romenski:2007,romenski:2010}.  Moreover, as shown in \cite{gallouet:2004}, under mild assumptions, this is the only all-topology velocity closure which allows the non-conservative products in the Baer-Nunziato model to be uniquely defined in the case of discontinuities. Therefore, there are both good physical and mathematical arguments in favour of this closure. 

However, this new constraint related to the volume fraction cannot be exactly integrated along the phasic trajectories, and as such, no exact expression for the perturbation $\delta\alpha_1$ as a function of $\delta\Traj_1$ and $\delta\Traj_2$ is available. To circumvent this issue, we impose this new constraint through a Lagrange multiplier $\zeta$. This leads to a supplementary contribution, $\zeta\rho\Dt\alpha_1$, to the Lagrangian, which we write as
\begin{equation}
\zeta\rho\Dt\alpha_1 = \zeta m_1\Dt^1\alpha_1 + \zeta m_2\Dt^2\alpha_1.
\end{equation}
Adding this contribution to the physical Lagrangian \eqref{eq:two-phase_Lagrangian}, results in the following effective Lagrangian,
\begin{multline}
\Lag = \half m_1 \vel_1^2 - m_1 e_1\pth{\dfrac{m_1}{\alpha_1},s_1} + \zeta m_1\Dt^1\alpha_1 \\ + \half m_2 \vel_2^2 - m_2 e_2\pth{\dfrac{m_2}{1-\alpha_1},s_2} + \zeta m_2\Dt^2\alpha_1.
\label{eq:lagrangian_with_constraint}
\end{multline}
The previously proposed framework is then applied with $\Lag_1$ and $\Lag_2$ being given by the first and second lines of eq. \eqref{eq:lagrangian_with_constraint}'s right hand-side respectively and where the constraints are given by $\Xiset=\{\alpha_1,\zeta\}$ and $\Bset_k=\{s_k\}$, $k=1,2$.  

Application of Theorem \ref{thrm:SAP} then yields the following equations,
\begin{subequations}
\begin{align}\label{sys:phasic_momentum_vol_frac_zeta}
\partial_t\pth{\alpha_1\rho_1\vel_1} + \nabla\cdot\pth{\alpha_1\rho_1\vel_1\otimes\vel_1 + \alpha_1 p_1\Id} - p_I\nabla\alpha_1 &=+\lift,\\
\partial_t\pth{\alpha_2\rho_2\vel_2} + \nabla\cdot\pth{\alpha_2\rho_2\vel_2\otimes\vel_2 + \alpha_2 p_2\Id} + p_I\nabla\alpha_1 &=-\lift,\\
\partial_t\alpha_1 + \vel\cdot\nabla\alpha_1 &=0,\\
\partial_t \zeta + \vel\cdot\nabla\zeta -\frac{p_1-p_2}{\rho} &=0, \label{eq:SAP_zeta_eq}
\end{align}
\end{subequations}
with the following expressions for the interfacial pressure $p_I$, and the additional momentum exchange force $\lift$,
\begin{subequations}
\begin{align}
p_I &= Y_2 p_1 + Y_1 p_2,\label{eq:interfacial_pressure}\\
\lift &= \rho Y_1 Y_2 \brck{\nabla\zeta\otimes\nabla\alpha_1 - \nabla\alpha_1\otimes\nabla\zeta}\bm{W},
\end{align}
\end{subequations}
where we recall that $\bm{W}=\vel_1-\vel_2$ denotes the relative velocity. These equations are to be supplemented with
\begin{subequations}
\begin{align}\label{sys:mass_entropy_constraints}
\partial_t \pth{\alpha_1\rho_1} + \nabla\cdot\pth{\alpha_1\rho_1 \vel_1} &=0,\\
\partial_t \pth{\alpha_2\rho_2} + \nabla\cdot\pth{\alpha_2\rho_2 \vel_2} &=0,\\
\partial_t s_1 + \vel_1\cdot\nabla s_1 &=0,\\
\partial_t s_2 + \vel_2\cdot\nabla s_2 &=0,
\end{align}
\end{subequations}
to form a complete and closed system. This systems admits a supplementary conservation law for the Hamiltonian, however, as is custom, we replace the phasic entropy equations by the phasic energy equations, as given by Theorem \ref{thrm:Hamiltonian}, and use the total entropy balance as supplementary equation. Moreover, notice that the expression of $\lift$ involves two gradients, $\nabla\alpha_1$ and $\nabla\zeta$, and so the system, as is, is not in the form of a first-order system of balance laws. To recast it in such a form, we introduce $\bm{v}=\nabla\zeta$ as a new independent variable which replaces $\zeta$. The resulting first-order system then writes
\begin{subequations}
\begin{align}
\partial_t\alpha_1 + \vel\cdot\nabla\alpha_1 &=0, \label{syst:alpha_eq}\\
\partial_t \pth{\alpha_1\rho_1} + \nabla\cdot\pth{\alpha_1\rho_1 \vel_1} &=0,\\
\partial_t \pth{\alpha_2\rho_2} + \nabla\cdot\pth{\alpha_2\rho_2 \vel_2} &=0,\\
\partial_t\pth{\alpha_1\rho_1\vel_1} + \nabla\cdot\pth{\alpha_1\rho_1\vel_1\otimes\vel_1 + \alpha_1 p_1\Id} - p_I\nabla\alpha_1 &=+\lift,\\
\partial_t\pth{\alpha_2\rho_2\vel_2} + \nabla\cdot\pth{\alpha_2\rho_2\vel_2\otimes\vel_2 + \alpha_2 p_2\Id} + p_I\nabla\alpha_1 &=-\lift,\\
\partial_t\pth{\alpha_1\rho_1 E_1} + \nabla\cdot\pth{\alpha_1\rho_1 H_1} - \pth{p\vel}_I\cdot\nabla\alpha_1 &= + \vel\cdot\lift,\\
\partial_t\pth{\alpha_2\rho_2 E_2} + \nabla\cdot\pth{\alpha_2\rho_2 H_2} + \pth{p\vel}_I\cdot\nabla\alpha_1 &= - \vel\cdot\lift,\\
\partial_t \bm{v} + \pth{\vel\cdot\nabla}\bm{v} + \pth{\nabla\vel}\bm{v} - \nabla\pth{\frac{p_1-p_2}{\rho}} &= \bm{0},\label{syst:v_eq}
\end{align}\label{syst:energy_form_with_lift}
\end{subequations}
with the new interfacial quantity, the interfacial work $(p\vel)_I$, which is different from the product $p_I\vel_I$, and is defined by
\begin{equation}\label{eq:interfacial_work}
(p\vel)_I = Y_1 p_2 \vel_1 + Y_2 p_1 \vel_2,
\end{equation}
and the right-hand side that is defined by
\begin{equation}\label{eq:lift_expression}
\lift=\rho Y_1 Y_2 \brck{\bm{v}\otimes\nabla\alpha_1 - \nabla\alpha_1\otimes\bm{v}}\bm{W}.
\end{equation}
This system must then be adjoined with the condition $\rot(\bm{v})=\bm{0}$. As this constraint is preserved in time by \eqref{syst:v_eq}, it reduces to a condition on the initial condition for $\bm{v}$. Moreover, one can show that the system is Galilean invariant. Finally, note that for smooth solutions, \eqref{syst:alpha_eq} may be recast into the following conservation law for $\rho \alpha_1$,
\begin{equation}
\partial_t\pth{\rho\alpha_1} + \nabla\cdot\pth{\rho\alpha_1\vel} = 0.
\end{equation}

Before moving on to the discussion about the obtained model's features, we mention that a fully Eulerian derivation is also possible. Stationary Action Principle in the fully Eulerian setting assume independent variations for each variable and include constraints such as mass conservation through Lagrange multipliers. One then needs to add the so-called Lin's constraints to the Lagrangian to enforce the fixed endpoints constraints \cite{bretherton:1970}. Hence, in the Eulerian setting, one considers the following Lagrangian,
\begin{multline}\label{eq:Lagrangian_Lin}
\Lag = \sum_{k=1}^{2}\Bigg\{\tfrac{1}{2}m_k\vel_k^2 - m_k e_k\pth{\frac{m_k}{\alpha_k},s_k} + \zeta m_k\Dt^k\alpha_1 \\
 + \xi_k\brck{\partial_t m_k + \nabla\cdot\pth{m_k\vel_k}} + m_k\theta_k\Dt^k s_k + m_k\sum_{i}\lambda_{k,i}\Dt^k\mu_{k,i}\Bigg\}.
\end{multline}
Its first line corresponds to the effective Lagrangian \eqref{eq:lagrangian_with_constraint}, whereas the second line contains the phasic mass conservation constraint, the phasic entropy transport constraint, and the phasic Lin constraint, imposed respectively by the Lagrange multipliers $\xi_k$, $\theta_k$; and $\lambda_{k,i}$. One then considers independent, smooth, and compactly supported perturbations $\delta\mathcal{X}$, for all variables $\mathcal{X}\in\{m_k,\vel_k,s_k,\alpha_1,\zeta,\xi_k,\theta_k,\lambda_{k,i},\mu_{k,i}\}$. The fully Eulerian Stationary Action Principle yields an evolution equation for each variable, except the phasic velocities which satisfy 
\begin{equation}
\vel_k =  \nabla\xi_k - \theta_k\nabla s_k - \zeta\nabla\alpha_1 - \sum_{i}\lambda_{k,i}\nabla\mu_{k,i}. 
\label{eq:Clebsch_representation_vel_k}
\end{equation}
Combining the various equations then allows to recover the phasic momentum equations and to derive system \eqref{syst:energy_form_with_lift}. The details of this derivation are provided in Appendix \ref{app:derivation_eulerienne}.

The decomposition of the phasic velocities given by eq. \eqref{eq:Clebsch_representation_vel_k} corresponds to a representation in terms of Clebsch potentials, such a decomposition is standard when considering the Stationary Action Principle in an Eulerian framework \cite{bretherton:1970}. The importance of adding Lin's constraints is also highlighted by eq. \eqref{eq:Clebsch_representation_vel_k}. Indeed, assume momentarily that no such constraint was added. Then one obtains \eqref{eq:Clebsch_representation_vel_k} with $\lambda_{k,i}\equiv0$ and $\mu_{k,i}\equiv0$. In particular, in the three-dimensional case, the phasic vorticity then writes
\begin{equation}
\rot(\vel_k)= -\nabla\theta_k \wedge \nabla s_k - \nabla\zeta\wedge\nabla\alpha_1.
\end{equation}
This is a non-physical and restrictive condition that limits the validity of the equations to a particular subset of flows. For instance, in isentropic flows, it restricts the initial conditions to velocity fields which have their vorticity orthogonal to the volume fraction gradient. Without Lin's constraint in the derivation of the Euler equations, a similar non-physical situation occurs where the vorticity is required to be orthogonal to the  gradient of the specific entropy \cite{bretherton:1970}.

\subsection{Analysis of the interfacial quantities}

We now discuss the interfacial quantities present in model \eqref{syst:energy_form_with_lift} derived in the previous paragraph. The supplementary variable $\bm{v}$, and the role of the momentum exchange term $\lift$, will be discussed hereafter.

Recall that model \eqref{syst:energy_form_with_lift} relies on three interfacial quantities, namely an interfacial velocity, chosen as the mass-weighted average velocity $\vel$, an interfacial pressure $p_I$, and an interfacial work term, denoted $(p\vel)_I$. Although the expression of the interfacial velocity is postulated and imposed as a constraint in the variational principle, the obtained closures for the interfacial pressure \eqref{eq:interfacial_pressure}, and interfacial work \eqref{eq:interfacial_work}, result from the variational principle itself. They are neither \textit{a priori} assumptions, nor are they \textit{a posteriori} closures as in the phase-averaged models. Yet, similar closures may be derived through phase averaging, indeed in \cite{ransom:1984}, a one-dimensional phase-averaged model was derived. The phase-average operator was applied to the entropy equations instead of the energy equations. The interfacial closures compatible with the conservation of the total energy were investigated and it was shown that total energy conservation requires the interfacial velocity and interfacial pressure to satisfy
\begin{equation}\label{eq:condition_Ransom}
\vel_1 p_1 - \vel_2 p_2 + (\vel_2-\vel_1)p_I + \vel_I(p_2-p_1)=\bm{0}.
\end{equation}
The authors then proposed $\vel_I=(\vel_1 + \vel_2)/2$, and obtained $p_I=(p_1 + p_2)/2$. However, when inserting $\vel_I=Y_1\vel_1 + Y_2\vel_2$ in \eqref{eq:condition_Ransom}, one obtains $p_I = Y_2 p_1 + Y_1 p_2$, and thus our model is also compatible with the phase-averaging approach proposed in \cite{ransom:1984}. 

The physical interpretation of the interfacial pressure \eqref{eq:interfacial_pressure} is the following. Each phase is subjected to its own pressure gradient, however, when the phases are out of pressure equilibrium, each phase is subjected to a supplementary force located at the interface and proportional to the exterior pressure. For phase $1$, its exterior pressure is $p_2$, whereas for phase $2$, its exterior pressure is $p_1$. Averaging the exterior pressures weighted by the concentrations of the phases upon which they act then yields $p_I = Y_1 p_2 + Y_2 p_1$, i.e. \eqref{eq:interfacial_pressure}, when the concentrations are chosen to be the mass fractions. In the phase averaged model derived in \cite{chen:1996}, a similar argument was made, but the volume fractions were chosen as concentrations instead, leading to the expression $p_I=\alpha_1 p_2 + \alpha_2 p_1$. This argument also provides an interpretation of the interfacial work \eqref{eq:interfacial_work}. If the force induced in phase $1$ by the pressure imbalance is proportional to $p_2$, then its associated work is proportional to $p_2\vel_1$. By symmetry, for phase $2$, we obtain $p_1\vel_2$. Averaging these contributions, in accordance with their respective mass fractions, yields $(p\vel)_I = Y_1 p_2\vel_1 + Y_2 p_1 \vel_2$, which is exactly the expression obtained from the variational principle in \eqref{eq:interfacial_work}.

Interestingly, our expression for the interfacial pressure \eqref{eq:interfacial_pressure} is identical to the one obtained in the models derived through the theory of thermodynamically compatible systems, as well as the interfacial pressure \eqref{eq:p_I_BN} of the all-topology Baer-Nunziato model analyzed in \cite{gallouet:2004} in the isothermal case. However, as discussed in the introduction, the interfacial pressure \eqref{eq:p_I_BN} of the all-topology Baer-Nunziato model strongly depends on the phasic temperatures, while the thermodynamically compatible systems present an artificial heat exchange as well as temperature gradients in their momentum equations. The interfacial work \eqref{eq:interfacial_work}, provided by the Stationary Action Principle, allows to overcome these issues. In comparison, the Baer-Nunziato models do not possess an independent quantity for the interfacial work $(p\vel)_I$, but consider instead the product $p_I\vel_I$, in the phasic energy equations.

Several other contributions also introduced the interfacial work as an independent quantity in addition to the interfacial velocity and interfacial pressure \cite{chen:1996,jin:2006,perrier:2021}. The approaches that have been followed in \cite{chen:1996,jin:2006}, relied on phase-averaging and the interfacial work appeared as one of the unclosed terms, while in \cite{perrier:2021}, a two-pressure two-fluid model has been derived through the averaging of a stochastic model and its closures depends on the assumptions on the stochastic process. In the stochastic derivation, an interfacial work term, different from the product of the interfacial velocity with the interfacial pressure in the general case, has been determined.

The impact of the interfacial work is highlighted in the phasic internal energy equations, as system \eqref{syst:energy_form_with_lift} implies 
\begin{subequations}
\begin{align}\label{eq:internal_energy_eq_all_topology}
\alpha_k\rho_k\Dt^k e_k = -\alpha_k p_k\nabla\cdot\vel_k + p_k\pth{\vel_I-\vel_k}\cdot\nabla\alpha_k .
\end{align}
The first term on the right-hand side of \eqref{eq:internal_energy_eq_all_topology} corresponds to the classical compression effects induced by a non divergence-free velocity field. The second term is an additional effect induced by a difference between the phasic velocity and the interfacial velocity. When $\pth{\vel_I-\vel_k}\cdot\nabla\alpha_k>0$, the $k^\text{th}$ phase is compressed, while otherwise it is dilated.
For the Baer-Nunziato system, the phasic internal energy equations write
\begin{equation}
\alpha_k\rho_k\Dt^k e_k  = -\alpha_k p_k\nabla\cdot\vel_k + p_I\pth{\vel_I-\vel_k}\cdot\nabla\alpha_k.
\end{equation}
As such, for the Baer-Nunziato models, the rate at which phasic internal energies increase, due to compressions caused by a relative velocity, is given by the interfacial pressure instead of the phasic pressures. 
\end{subequations}

Similarly, for the phasic entropy equations, our model \eqref{syst:energy_form_with_lift} results in the transport of the phasic entropies at their own velocity,
\begin{subequations}
\begin{equation}
\alpha_k\rho_k \Dt^k s_k = 0,
\end{equation}
while for the Baer-Nunziato model, one has
\begin{equation}
\alpha_k\rho_k \Dt^k s_k =\frac{1}{T_k}\pth{p_I-p_k}\pth{\vel_I-\vel_k}\cdot\nabla\alpha_k.
\end{equation}
\end{subequations}
In particular, any all-topology Baer-Nunziato model leads to entropy exchanges between the phases and a conservation law for the total entropy can only be achieved through an interfacial pressure that depends on the phasic temperatures \cite{gallouet:2004,muller:2016}. It is precisely the new term related to the interfacial work that allows for a physically relevant model, which we will prove in Section \mbox{\ref{sec:Model_analysis}} to possess all the required fundamental mathematical properties for such PDE systems in the 1D case.

\subsection{Multi-dimensional effects}

We now discuss the forces $\lift$, defined by \eqref{eq:lift_expression}, which appear on the right-hand side of system \eqref{syst:energy_form_with_lift}, and the new variable $\bm{v}$. First, in the one-dimensional setting, this force is zero, and equation \eqref{syst:v_eq}, which governs the evolution of $\bm{v}$, can be discarded. In the general case, this force may be non-zero, but always satisfies $\bm{W}\cdot\lift= 0$, as such it is a force that represents lift. Moreover, because it depends on $\nabla\alpha_1$, this lift force is localized at the interfaces. Its amplitude and direction is partly determined by $\bm{v}=\nabla\zeta$, which has the dimension of a velocity, and thus $\zeta$ plays the role of a velocity potential. We recall that the evolution equation for $\zeta$, given by \eqref{eq:SAP_zeta_eq}, writes
\begin{equation}
\rho\Dt\zeta = p_1 - p_2.
\end{equation}
Consequently, for flows which remain at pressure equilibrium, $\zeta$ will be constant along the fictive trajectories defined by the mixture velocity $\vel$. If at the initial time it is constant, it remains so and the lift force is zero. As a result, expression \eqref{eq:lift_expression} corresponds to lift induced by pressure non-equilibrium. At this stage, the question of how to initialize $\bm{v}=\nabla\zeta$, and its physical meaning, remain under investigation.

The appearance of lift forces in two-phase flow models obtained by variational principles has already been noted, see \cite{romenski:2007} for instance. However, it has been shown in \cite{gavrilyuk:2020}, that for a same physical Lagrangian several systems may be derived, which differ only by the expression of their lift forces. In particular, during the variational process, fluid particle trajectories $\Traj_1$ and $\Traj_2$, are used to define the variations of the velocities $\vel_1$ and $\vel_2$. If instead, one operates a change of variables from $(\vel_1, \vel_2)$ to $(\vel, \vel_1)$ or $(\vel, \vel_2)$, and associates fictive trajectories to these new velocities, then one obtains a new set of equations which differs only by the expression of its lift forces. Consequently, we have used in our derivation only the true fluid particle trajectories associated to $\vel_1$ and $\vel_2$. 

Lift forces in the phasic momentum equations have also been obtained in the Thermodynamically Compatible models \cite{romenski:2004,romenski:2007,romenski:2010}, their expression however does not require any new variable as the forces write
\begin{equation}
\pm\rho Y_1 Y_2 \brck{Y_2\rot\pth{\vel_1} + Y_1\rot\pth{\vel_2}}\wedge\bm{W}. \label{eq:lift_Romenski}
\end{equation}
This expression may be interpreted as the effect of the rotation of one fluid, determined by $\rot(\vel_k)$, on the other phase $k'$. However, the  derivation of these models relies on the change of variables $(\vel_1, \vel_2)$ to $(\vel, \bm{W})$, and the use of a single fictive trajectory associated to the mixture velocity $\vel$, while the relative velocity $\bm{W}$ is expressed through the gradient of the Lagrange multiplier which enforces the phasic mass conservation, as shown in \cite{haegeman_RelVel:2025}. Moreover, in these models, the variational principle imposes $\rot(\bm{W})=\bm{0}$, i.e. $\rot(\vel_1)=\rot(\vel_2)$, and thus the fluids have the same rate of rotation. In our model, this involution constraint is carried by the new variable $\bm{v}$, as $\rot(\bm{v})=\bm{0}$, and the phasic velocity fields are unconstrained. Still, in the absence of clear interpretation for our $\bm{v}$ variable, and in light of the results shown in \cite{gavrilyuk:2020} concerning the impact of fictive trajectories on lift forces, the validity of either \eqref{eq:lift_expression} or \eqref{eq:lift_Romenski} remains unclear.

Yet, lift forces do correspond to a physical and observable reality. In the case of disperse flows, a rotating particle in a carrier fluid is subjected to a lift force, called Magnus force, which at low Reynolds numbers writes \cite{rubinow:1961,crowe:2011}, 
\begin{equation}
\lift_{\text{Magnus}} = \frac{\pi}{8}D^3\rho_1 \pth{\frac{1}{2}\rot(\vel_1)-\bm{\omega}_2}\wedge\bm{W},
\end{equation}
where the index $1$ represents the carrier fluid and $2$ the particle which has diameter $D$, and rotation $\bm{\omega}_2$. However, in order to obtain such effects, one should need to consider the rotational energy, proportional to $\bm{\omega}_k^2$, in the Lagrangian. As this energy has not been accounted for in the two-phase Lagrangian \eqref{eq:two-phase_Lagrangian}, neither \eqref{eq:lift_expression} nor \eqref{eq:lift_Romenski} represent internal degrees corresponding to particle rotations.

A better understanding of the lift forces requires further investigations. Our goal is to provide a two-fluid model with a supplementary conservation law, for which the interfacial pressure does not depend on the temperatures as in eq. \eqref{eq:p_I_BN} for the Baer-Nunziato model, and for which there is no artificial heat exchange as in eq. \eqref{eq:artificial_heat_SHTC} for the class of Thermodynamically Compatible models \cite{romenski:2007,romenski:2010}. As the lift forces are expected to be negligible for near pressure equilibrium flows --- see also the next section where we consider relaxation source terms --- we may consider the model without its lift forces which is also of interest. When doing so, we may discard the supplementary variable $\bm{v}$, for which we have no clear physical interpretation yet, and obtain a model which satisfies our requirements, and has good mathematical properties as will be shown in Section \ref{sec:Model_analysis}. The simplified model then reads
\begin{subequations}
\begin{align}
\partial_t\alpha_1 + \vel\cdot\nabla\alpha_1 &=0,\\
\partial_t \pth{\alpha_1\rho_1} + \nabla\cdot\pth{\alpha_1\rho_1 \vel_1} &=0,\\
\partial_t \pth{\alpha_2\rho_2} + \nabla\cdot\pth{\alpha_2\rho_2 \vel_2} &=0,\\
\partial_t\pth{\alpha_1\rho_1\vel_1} + \nabla\cdot\pth{\alpha_1\rho_1\vel_1\otimes\vel_1 + \alpha_1 p_1\Id} - p_I\nabla\alpha_1 &=\bm{0},\\
\partial_t\pth{\alpha_2\rho_2\vel_2} + \nabla\cdot\pth{\alpha_2\rho_2\vel_2\otimes\vel_2 + \alpha_2 p_2\Id} + p_I\nabla\alpha_1 &=\bm{0},\\
\partial_t\pth{\alpha_1\rho_1 E_1} + \nabla\cdot\pth{\alpha_1\rho_1 H_1} - \pth{p\vel}_I\cdot\nabla\alpha_1 &= 0,\\
\partial_t\pth{\alpha_2\rho_2 E_2} + \nabla\cdot\pth{\alpha_2\rho_2 H_2} + \pth{p\vel}_I\cdot\nabla\alpha_1 &= 0.
\end{align}\label{syst:energy_form_without_lift}
\end{subequations}
We may then conclude that, while our model allows to have both physically coherent closures for the interfacial quantities, the modelling of multi-dimensional effects such as lift requires further efforts. We now move on to the next section which concerns the modelling of dissipative source terms.

\subsection{Modelling source terms}

The Hamiltonian principle provides only reversible systems that are without dissipation. Consequently, any dissipative effects must be added \textit{a posteriori}. Moreover, as the model we have derived allows for full non-equilibrium between the two fluids, similarly to the Baer-Nunziato model, it must be equipped with dissipative source terms in order to provide a meaningful description of two-phase flows. These source terms must be such that in the absence of exterior perturbations, the system is driven towards equilibrium. Furthermore, they must be in agreement with the Second Principle of Thermodynamics and induce dissipation in the form of entropy production. As the prognostic variables and the total entropy of our model \eqref{syst:energy_form_without_lift} are the same as for the Baer-Nunziato model, the same source terms, as determined in \cite{baer:1986,kapila:2001,saurel:2001,muller:2016,perrier:2021}, can be seamlessly considered. Doing so, we obtain
\begin{subequations}
\begin{align}
\partial_t\alpha_1 + \vel\cdot\nabla\alpha_1 &= S_{\text{mec}},\\
\partial_t \pth{\alpha_1\rho_1} + \nabla\cdot\pth{\alpha_1\rho_1 \vel_1} &=0,\\
\partial_t \pth{\alpha_2\rho_2} + \nabla\cdot\pth{\alpha_2\rho_2 \vel_2} &=0,\\
\partial_t\pth{\alpha_1\rho_1\vel_1} + \nabla\cdot\pth{\alpha_1\rho_1\vel_1\otimes\vel_1 + \alpha_1 p_1\Id} - p_I\nabla\alpha_1 -\lift &=+\bm{S}_{\text{kin}},\\
\partial_t\pth{\alpha_2\rho_2\vel_2} + \nabla\cdot\pth{\alpha_2\rho_2\vel_2\otimes\vel_2 + \alpha_2 p_2\Id} + p_I\nabla\alpha_1 +\lift &=-\bm{S}_{\text{kin}},\\
\partial_t\pth{\alpha_1\rho_1 E_1} + \nabla\cdot\pth{\alpha_1\rho_1 H_1} - \pth{p\vel}_I\cdot\nabla\alpha_1 - \vel\cdot\lift &= +S_{\text{th}} -p_I S_{\text{mec}}  + \vel\cdot \bm{S}_{\text{kin}},\\
\partial_t\pth{\alpha_2\rho_2 E_2} + \nabla\cdot\pth{\alpha_2\rho_2 H_2} + \pth{p\vel}_I\cdot\nabla\alpha_1 + \vel\cdot\lift &= -S_{\text{th}} +p_I S_{\text{mec}} - \vel\cdot \bm{S}_{\text{kin}},\\
\partial_t \bm{v} + \pth{\vel\cdot\nabla}\bm{v} + \pth{\nabla\vel}\bm{v} - \nabla\pth{\frac{p_1-p_2}{\rho}} &= \bm{0}, \label{syst:v_eq_thermo}
\end{align}\label{syst:energy_form_with_source_terms}
\end{subequations}
where $S_{\text{mec}}$, $\bm{S}_{\text{kin}}$ and $S_{\text{th}}$ correspond to mechanical, kinematic, and thermal relaxation terms respectively. The total entropy equation then writes
\begin{equation}
\begin{split}
\partial_t\pth{\alpha_1\rho_1 s_1 + \alpha_2 \rho_2 s_2} + \nabla\cdot\pth{\alpha_1\rho_1 s_1 \vel_1 + \alpha_2 \rho_2 s_2 \vel_2} =
 &\pth{\frac{1}{T_1}-\frac{1}{T_2}}S_{\text{th}} \\ & + \pth{\frac{Y_2}{T_1} + \frac{Y_1}{T_2}}(\vel_2-\vel_1)\cdot\bm{S}_{\text{kin}} \\ & + \pth{\frac{Y_1}{T_1} + \frac{Y_2}{T_2}}(p_1-p_2)S_{\text{mec}} ,
\end{split}\label{eq:entropy_balance}
\end{equation}
and therefore, setting
\begin{subequations}
\begin{align}
S_{\text{mec}} & = (p_1-p_2)/\varepsilon_p,\\
\bm{S}_{\text{kin}} & = (\vel_2-\vel_1)/\varepsilon_u ,\\
S_{\text{th}} & = (T_2 - T_1)/\varepsilon_T,
\end{align}
\end{subequations}
then yields a non-negative entropy production for positive relaxation parameters $\varepsilon_p$, $\varepsilon_u$, and $\varepsilon_T$. Several closures for the relaxation parameters have been proposed in the literature, see for instance \cite{petitpas:2009,perrier:2021,schmidmayer:2023,herard:2023}. We refer to \cite{gallouet:2004} (Proposition 2) for a result on the preservation of the volume fraction constraint $\alpha_1\in(0,\,1)$ in the presence of source terms. Furthermore, in flow regimes where some of the associated relaxation time-scales are sufficiently small, reduced order models have been derived assuming that the corresponding flow variables remain at equilibrium \cite{kapila:2001,phd_Labois,lund:2012,hantke:2021}. The same model reductions can be applied to \eqref{syst:energy_form_with_source_terms}. First, near equilibrium, the lift forces play no role. Indeed, at velocity equilibrium, we have $\lift=\bm{0}$, while at pressure equilibrium, eq. \eqref{syst:v_eq_thermo} yields $\bm{v}=\bm{0}$, and thus $\lift=\bm{0}$, if we consider the initial condition $\bm{v}(0,\bm{x})=\bm{0}$.  Moreover, the expression provided by the variational principle for the interfacial work \eqref{eq:interfacial_work}  satisfies
\begin{equation}
(p\vel)_I = p_I \vel_I - Y_1 Y_2(\vel_1 - \vel_2)(p_1-p_2),
\end{equation}
and therefore, when considering the asymptotic expansions near velocity or pressure equilibrium, the interfacial work represents a first order perturbation of a Baer-Nunziato model, and thus the reduced order models obtained at zeroth order are identical. This is a reassuring result for two reasons. First, the closure problem for interfacial quantities is related to non-equilibrium flows only. Second, several of these reduced order models recover some well-known two-phase flow features, such as the so-called Kapila model \cite{kapila:2001}, which is derived assuming velocity and pressure equilibrium, and recovers the non-monotonic mixture sound speed as determined by Wood \cite{wood:1930}.

Unfortunately, unless complete equilibrium is assumed, these reduced order models are known to be either non-hyperbolic \cite{hantke:2021}, or to have non-conservative products which are ill-defined for shocks \cite{kapila:2001}. Therefore, the use of fully out of equilibrium models, such as the one we have derived, combined with relaxation source terms and numerical methods capable of handling their inherent stiffness, such as the one proposed in \cite{herard:2023}, is preferable to obtain convergent numerical simulations.

In the previous section, we discussed the possibility to neglect lift forces. This operation is compatible with the proposed structure for the source terms since the lift forces play no role in the system's thermodynamics. In particular, if we set $\lift=\bm{0}$ and discard eq. \eqref{syst:v_eq_thermo}, the entropy equation \eqref{eq:entropy_balance} remains valid. Furthermore, we may also add source terms to the evolution equation for $\bm{v}$, i.e. eq. \eqref{syst:v_eq_thermo}, without altering the entropy balance \eqref{eq:entropy_balance} as long as they are compatible with the constraint $\rot(\bm{v})=\bm{0}$. In particular, we may consider the relaxation
\begin{equation}
\partial_t \bm{v} + \pth{\vel\cdot\nabla}\bm{v} + \pth{\nabla\vel}\bm{v} - \nabla\pth{\frac{p_1-p_2}{\rho}} = -\frac{1}{\tau_v}\bm{v},
\end{equation}
with a constant time-scale $\tau_v>0$. The model without lift forces can then be obtained by considering the formal limit $\tau_v\to0$.\\

The next section is dedicated to the mathematical analysis of the model, in particular we exhibit the necessary properties for its well-posedness and show that it allows for the proper treatment of weak solutions.

\section{Mathematical analysis of the model: the one-dimensional setting}\label{sec:Model_analysis}

In this section, the mathematical properties of the model are analysed. We refer to textbooks on systems of conservation laws, such as \cite{godlewski:2021}, for definitions of hyperbolicity, Riemann invariants, and the various properties that will be considered. Our analysis will be carried out in the one-dimensional setting and without source terms. In this case, the lift forces are zero and the equation on $\bm{v}$ is discarded, as in \eqref{syst:energy_form_without_lift}. As the system is, to some extent, an extension of the Baer-Nunziato model, many properties verified for the Baer-Nunziato model are also verified for our model. Moreover, the proofs provided are adaptations of existing proofs for the Baer-Nunziato model found in \cite{gallouet:2004,coquel:2013}. Some properties concerning the multi-dimensional case are provided at the end of this section. 

\subsection{Hyperbolicity and characteristic fields}

The development of two-pressure models is mainly motivated by the ill-posedness of the single-pressure models \cite{lhuillier:2013}. Single-pressure models exhibit complex characteristics and are therefore linearly unstable. Consequently, we consider real characteristics to be a minimal requirement for the modelling of two-phase flows. However, several contributions have suggested that instabilities due to non-hyperbolicity may be controlled \cite{ramshaw:1978,vazquez-gonzalez:2016}.  Two-pressure models often have real characteristics \cite{ransom:1984, gallouet:2004}. For the model we have derived, this is specified by the following proposition. 

\begin{proposition}[Hyperbolicity]\label{prop:hyperbolicity} The system admits real eigenvalues which are $u$, $u_k$ and $u_k\pm c_k$, with $k=1,2$, and where $c_k=\partial p_k/\partial{\rho_k}_{|s_k}$, denotes the phasic sound velocity. The corresponding eigenvectors span $\mathbb{R}^7$, and the system is hyperbolic, under the non-resonance condition $u\neq u_k\pm c_k$, for $k=1,2$.
\end{proposition}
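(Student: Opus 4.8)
The plan is to recast the one-dimensional restriction of system~\eqref{syst:energy_form_without_lift} (in which the lift forces vanish and the $\bm v$-equation is dropped) in quasi-linear form $\partial_t W + A(W)\partial_x W=\bm 0$, and to exhibit both the spectrum of $A$ and a full set of eigenvectors; since hyperbolicity is invariant under the choice of variables, I am free to work in the primitive variables $W=(\alpha_1,\rho_1,u_1,s_1,\rho_2,u_2,s_2)^\top$. For smooth solutions the phasic total-energy equations are equivalent to the phasic entropy transport $\Dt^k s_k=0$ established in Section~\ref{sec:Model_derivation}, so I would use these in place of the energy balances. Eliminating the time derivatives of the conserved quantities with the mass balances, the governing equations reduce to the volume-fraction transport $\partial_t\alpha_1+u\,\partial_x\alpha_1=0$, a phase-$k$ mass equation $\Dt^k\rho_k+\rho_k\partial_x u_k+(\cdots)\partial_x\alpha_1=0$, a phase-$k$ momentum equation $\Dt^k u_k+\rho_k^{-1}\partial_x p_k+(\cdots)\partial_x\alpha_1=0$, and the entropies $\Dt^k s_k=0$. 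The structural point I would stress is that every inter-phase coupling enters \emph{only} through $\partial_x\alpha_1$: no phase-$k$ equation contains a spatial derivative of the other phase's variables, and the $\alpha_1$-equation contains no spatial derivative other than $\partial_x\alpha_1$.

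Consequently, in these variables $A$ is block lower-triangular,
\begin{equation*}
A=\begin{pmatrix} u & \bm 0^\top & \bm 0^\top\\ \bm b_1 & M_1 & \bm 0\\ \bm b_2 & \bm 0 & M_2\end{pmatrix},
\end{equation*}
where $M_k$ is the $3\times3$ gas-dynamics matrix of phase $k$ in $(\rho_k,u_k,s_k)$ and $\bm b_k$ collects the $\partial_x\alpha_1$-couplings. The triangular structure gives at once $\det(A-\lambda\Id)=(u-\lambda)\det(M_1-\lambda\Id)\det(M_2-\lambda\Id)$, and since the eigenvalues of each $M_k$ are the classical gas-dynamics values $u_k$ and $u_k\pm c_k$, I recover exactly the seven eigenvalues of the statement. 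Their reality reduces to $c_k^2=(\partial p_k/\partial\rho_k)_{s_k}>0$, which is guaranteed by the strict concavity of $s_k$ in $(\tau_k,e_k)$ assumed in the derivation.

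It then remains to produce seven independent eigenvectors $r=(r_\alpha,r_1,r_2)$. For the six phasic eigenvalues $\mu\in\{u_k,u_k\pm c_k\}$ the first row forces $r_\alpha=0$ (as $\mu\neq u$ generically), after which $r$ reduces to the corresponding Euler eigenvector of $M_k$ padded by zeros in the other block; because $c_k>0$ each $M_k$ is strictly hyperbolic, and because the phase-$1$ and phase-$2$ eigenvectors are supported on disjoint coordinates, these six vectors are automatically independent \emph{regardless of any coincidence of eigenvalues across the two phases}. The seventh vector, associated with $\lambda=u$, is the delicate one: the first row is satisfied for arbitrary $r_\alpha$, and the phase-$k$ rows then require $r_k=-(M_k-u\Id)^{-1}\bm b_k\,r_\alpha$. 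This inversion — and hence the existence of an eigenvector with $r_\alpha\neq0$, transverse to the previous six — is licit precisely when $u$ is not an eigenvalue of either $M_k$.

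The hard part, and the origin of the stated hypothesis, is controlling $\lambda=u$ against the acoustic values $u_k\pm c_k$. If $u=u_k\pm c_k$ for some $k$, then $M_k-u\Id$ is singular; one verifies that $\bm b_k$ has a nonzero component along the corresponding left acoustic eigenvector, so $\bm b_k r_\alpha+(M_k-u\Id)r_k=\bm 0$ admits no solution with $r_\alpha\neq0$, the $\lambda=u$ eigenspace collapses, a Jordan block appears, and diagonalizability is lost — the classical resonance phenomenon. I would also point out that the coincidence $u=u_k$ (which forces $u_1=u_2$) is harmless: since the entropy equation carries no $\partial_x\alpha_1$-coupling, $\bm b_k$ lies in the range of the singular matrix $M_k-u\Id$, so the missing eigendirections are supplied by the contact kernels of the blocks and the geometric multiplicity still matches the algebraic one. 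Under the non-resonance condition $u\neq u_k\pm c_k$, assembling the six phasic eigenvectors together with the $\lambda=u$ eigenvector and checking their independence exactly as in the Baer--Nunziato analyses of~\cite{gallouet:2004,coquel:2013} shows that they span $\mathbb R^7$; hence $A$ is $\mathbb R$-diagonalizable and the system is hyperbolic.
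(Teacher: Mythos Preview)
Your proof is correct and follows essentially the same strategy as the paper's: recast the one-dimensional system in primitive variables (the paper uses $(\alpha_1,u_k,p_k,s_k)$ while you use $(\alpha_1,\rho_k,u_k,s_k)$), exploit the block-triangular structure to read off the eigenvalues, and invoke invertibility of $M_k-u\Id$ under non-resonance to build the interfacial eigenvector --- which the paper simply displays explicitly rather than arguing structurally. One minor overclaim worth flagging: your assertion that at resonance $\bm b_k$ \emph{necessarily} has a nonzero component along the left acoustic eigenvector is too strong --- the paper's remark following the proposition identifies exceptional states (characterized by $p_I=p_k-\rho_kc_k^2$) at which the system remains diagonalizable even at resonance --- but this does not affect the proof of the proposition itself, which asserts only sufficiency of the non-resonance condition.
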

\begin{proof}
Through a change of variables, the system is expressed in terms of the variable $\bm{U}=(\alpha_1,u_1,p_1,s_1,u_2,p_2,s_2)^\top$, and writes in the following quasilinear form,
\begin{equation}\label{eq:quasilinear_form}
\partial_t\bm{U} + \mtrx{M}(\bm{U})\partial_x\bm{U}=\bm{0},
\end{equation}
with
\begin{equation}
\mtrx{M}(\bm{U}) = 
\begin{bmatrix}
u & \bm{0}_{1\times3} & \bm{0}_{1\times3} \\
\bm{z}_{\alpha,1}  & \mtrx{M}_1 & \bm{0}_{3\times3}\\
\bm{z}_{\alpha,2}  & \bm{0}_{3\times3} & \mtrx{M}_2
\end{bmatrix},
\end{equation}
and
\begin{equation}
\mtrx{M}_k = \begin{bmatrix}
u_k & 1/\rho_k & 0 \\
\rho_k c_k^2 & u_k & 0\\
0 & 0 & u_k
\end{bmatrix},
\qquad 
\bm{z}_{\alpha,k} = 
\begin{bmatrix}
\tfrac{p_1-p_2}{\rho} \\ (-1)^k\tfrac{\rho_k c_k^2}{\alpha_k}(u-u_k)\\ 0
\end{bmatrix}.
\end{equation}
The block-triangular structure of $\mtrx{M}(\bm{U})$ allows for an immediate computation of the eigenvalues. When the non-resonance condition is satisfied, the matrix $\mtrx{M}(\bm{U})$ is diagonalizable and a matrix of right eigenvectors $\mtrx{R}(\bm{U})$ is given by
\begin{equation}\label{eq:eigenvectors}
\mathbf{R}(\bm{U})= 
\begin{bmatrix}
1 & 0 & 0 & 0 & 0 & 0 & 0 \\
\tfrac{-Y_2 W}{(u_1-u)^2-c_1^2}\pth{\tfrac{p_1-p_2}{\rho} - \tfrac{c_1^2}{\alpha_1}} & 1 & 1 & 0 & 0 & 0 & 0 \\
\tfrac{\rho_1 c_1^2}{(u_1-u)^2-c_1^2}\pth{\tfrac{p_1-p_2}{\rho} - \tfrac{Y_2^2 W^2}{\alpha_1}} & \rho_1 c_1 & -\rho_1 c_1 & 0 & 0 & 0 & 0 \\
0 & 0 & 0 & 1 & 0 & 0 & 0\\
\tfrac{Y_1 W}{(u_2-u)^2-c_2^2}\pth{\tfrac{p_1-p_2}{\rho} + \tfrac{c_2^2}{\alpha_2}} & 0 & 0 & 0 & 1 & 1 & 0\\
\tfrac{\rho_2 c_2^2}{(u_2-u)^2-c_2^2}\pth{\tfrac{p_1-p_2}{\rho} + \tfrac{Y_1^2 W^2}{\alpha_2}}  & 0 & 0 & 0 & \rho_2 c_2 & -\rho_2 c_2 & 0\\
0 & 0 & 0 & 0 & 0 & 0 & 1
\end{bmatrix}.
\end{equation} 
The columns of $\mtrx{R}(\bm{U})$ correspond to eigenvectors $u$, $u_1+c_1$, $u_1-c_1$ ,$u_1$, $u_2+c_2$, $u_2-c_2$ and $u_2$  respectively.
\end{proof}

\begin{remark}
The non-resonance condition also applies to the Baer-Nunziato models \cite{gallouet:2004} and the two-pressure models derived in \cite{ransom:1984}.
\end{remark}
 
\begin{remark}
At resonance, two eigenvalues coincide and an eigenvector may be missing. However, as determined in \cite{ransom:1984}, a complete set of eigenvectors may be available under additional conditions. Assume resonance for the $k^\text{th}$ phase, i.e. $c_k^2=(u-u_k)^2$, if $p_I = p_k - \rho_k c_k^2$, then the system remains hyperbolic. This also holds when both phases are in resonance.
\end{remark}

We now investigate the nature of the different characteristic fields. To do so, we introduce, for each phase $k$, its so-called fundamental derivative $\mathcal{G}_k$ \cite{menikoff:1989}, which is the thermodynamic function defined by
\begin{equation}
\mathcal{G}_k = -\frac{\tau_k}{2}\frac{\partial^3 e_k / \partial{\tau_k^3}_{|s_k}}{\partial^2 e_k / \partial{\tau_k^2}_{|s_k}},
\end{equation}
where $\tau_k=1/\rho_k$, is the phase's specific volume.

The fundamental derivative $\mathcal{G}_k$ plays an important role in the study of the Riemann problem and the condition $\mathcal{G}_k>0$, or even $\mathcal{G}_k>1$, is usually implicitly assumed. For a single fluid, the condition $\mathcal{G}>0$ corresponds to the convexity of the isentropes in the $p-\tau$ plane, with $\tau=1/\rho$, the specific volume. In particular, it ensures that shocks are compressive, while for $\mathcal{G}<0$, the system admits rarefactions shocks instead. The stronger condition, $\mathcal{G}>1$, provides the convexity of the isentropes in the $p-\rho$ plane as well, and ensures that the sound velocity increases along the Hugoniot curves \cite{thompson:1971,menikoff:1989}.
 
We then have the following classification of the characteristic fields.

\begin{proposition}[Characteristic fields]\label{prop:characteristic_fields} When the system is out of resonance, the characteristic fields associated to the convective eigenvalues $u$ and $u_k$, for $k=1,2$, are linearly degenerate. Moreover, for $k=1,2$, if the equation of state of the $k^\text{th}$ phase is such that $\mathcal{G}_k\neq 0$, then the characteristic fields associated to the acoustic eigenvalues $u_k\pm c_k$, are genuinely non-linear.
\end{proposition}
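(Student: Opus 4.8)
The plan is to exploit the explicit diagonalization already furnished by Proposition \ref{prop:hyperbolicity}. Since the quasilinear matrix $\mtrx{M}(\bm{U})$ is written in the variables $\bm{U}=(\alpha_1,u_1,p_1,s_1,u_2,p_2,s_2)^\top$ and its eigenvectors are given in closed form in \eqref{eq:eigenvectors}, the classification of each characteristic field reduces to evaluating, for each eigenpair $(\lambda,\bm{r})$, the directional derivative $\nabla_{\bm{U}}\lambda\cdot\bm{r}$: the field is linearly degenerate when this quantity vanishes identically, and genuinely non-linear when it never vanishes. First I would treat the convective fields. For the eigenvalue $u=Y_1 u_1 + Y_2 u_2$, the associated eigenvector is the first column of $\mtrx{R}(\bm{U})$, and I would check that $u$ is constant along its integral curves; the cleaner route is to identify the Riemann invariants of this field and verify that $u$ is one of them, or equivalently to compute $\nabla_{\bm{U}}u\cdot\bm{r}_u$ and confirm it is zero. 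For the phasic convective eigenvalues $u_k$, whose eigenvectors are the pure-$s_k$ columns (the fourth and seventh columns of $\mtrx{R}$, which perturb only the entropy $s_k$), the gradient of $u_k$ has no component along $s_k$, so the contraction vanishes trivially and linear degeneracy is immediate.

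Next I would handle the acoustic fields $u_k\pm c_k$. Here the eigenvectors perturb $(u_k,p_k)$ (the second, third, fifth, sixth columns of $\mtrx{R}$), so I would compute $\nabla_{\bm{U}}(u_k\pm c_k)\cdot\bm{r}$ and reduce it, as in the single-fluid case, to an expression proportional to the fundamental derivative $\mathcal{G}_k$. The standard computation expresses $\partial c_k/\partial p_k$ and $\partial c_k/\partial u_k$ along the eigenvector in terms of the isentropic second derivative of $e_k$, and the whole contraction collapses to something of the form $\pm(\text{positive factor})\,\mathcal{G}_k$. Thus the field is genuinely non-linear precisely when $\mathcal{G}_k\neq 0$, which is the stated hypothesis.

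The main subtlety, rather than the main obstacle, is that the acoustic eigenvectors of the full coupled system contain the $\alpha_1$-direction coupling terms with denominators $(u_k-u)^2-c_k^2$; however, the acoustic eigenvectors listed in \eqref{eq:eigenvectors} have a vanishing first component and zero entries in the $\alpha_1$ and cross-phase slots, so the coupling does not enter the computation of $\nabla\lambda\cdot\bm{r}$ for the acoustic fields, and the analysis decouples into two copies of the single-phase acoustic computation. I would therefore simply invoke the classical single-fluid result that $c_k^2=\partial^2 e_k/\partial\tau_k^2|_{s_k}\,\tau_k^2$ (up to sign conventions) together with the definition of $\mathcal{G}_k$ to conclude. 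The non-resonance condition $u\neq u_k\pm c_k$ is needed only to guarantee that the eigenvectors in \eqref{eq:eigenvectors} are well-defined and that the seven fields are genuinely distinct, so that each field's nature is individually well-posed; this is precisely the hypothesis under which the proposition is stated.

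I expect the only place requiring care is bookkeeping the sign and the thermodynamic identities linking $c_k$, $\mathcal{G}_k$, and the derivatives of $e_k$, but since these are adaptations of the Baer--Nunziato proofs in \cite{gallouet:2004,coquel:2013} and the convective-field degeneracy follows from the block and sparsity structure of $\mtrx{R}(\bm{U})$, no genuinely new difficulty arises.
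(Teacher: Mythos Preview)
Your proposal is correct and matches the paper's approach for the phasic convective fields $u_k$ and the acoustic fields $u_k\pm c_k$; in fact the paper obtains exactly $\nabla_{\bm{U}}(u_k\pm c_k)\cdot\bm{r}_k=\mathcal{G}_k$, not merely a quantity proportional to it. The one point where the paper proceeds slightly differently is the interfacial field $u$: rather than computing $\nabla_{\bm{U}}u\cdot\bm{r}_u$ in the variables $\bm{U}$ --- which is awkward because $Y_k$ depends on $\rho_k(p_k,s_k)$ through the equation of state --- the paper passes to $\bm{V}=(\alpha_1,u_1,p_1,m_1,u_2,p_2,m_2)$, in which $u=(m_1u_1+m_2u_2)/(m_1+m_2)$ depends only on $u_k,m_k$, so $\nabla_{\bm{V}}u$ has a simple closed form and the contraction with the transformed eigenvector is a short direct check. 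Your alternative of invoking $u$ as a Riemann invariant is valid in principle but would invert the paper's logical order, since the Riemann invariants are only established in the subsequent proposition.
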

\begin{proof}
The linearly degenerate character of the eigenvalues $u_1$ and $u_2$ is immediate owing to the expression of their eigenvectors given by \eqref{eq:eigenvectors}. For $k=1,2$, let $\lambda_k(\bm{U})=u_k\pm c_k$ and $\bm{r}_k(\bm{U})$ be the associated eigenvector as given by \eqref{eq:eigenvectors}. We then have that
\begin{equation}
\nabla_{\bm{U}}\lambda_k\cdot\bm{r}_k = 1 + \rho_k c_k \frac{\partial c_k}{\partial p_k}_{|s_k} = 1 + \frac{1}{2}\rho_k\frac{\tfrac{\partial^2 p_k}{\partial \rho_k^2}_{|s_k}}{\tfrac{\partial p_k }{ \partial \rho_k}_{|s_k}} = \mathcal{G}_k,
\end{equation}
which shows the genuinely non-linear character of the acoustic waves. As the classification of the characteristic fields does not depend on the choice of variables, to show the linear degeneracy of the interfacial characteristic field, we use the set of variables $\bm{V}=(\alpha_1, u_1, p_1, m_1, u_2, p_2, m_2)$ with $m_k=\alpha_k \rho_k$. With these variables, the eigenvector $\bm{r}$ associated to the eigenvalue $u$ is given by the first column of \eqref{eq:eigenvectors} but where its two zero entries are respectively replaced by $\frac{\rho Y_1}{(u_1-u)^2-c_1^2}((p_1-p_2)/\rho - c_1^2/\alpha_1)$ and $\frac{\rho Y_2}{(u_2-u)^2-c_2^2}((p_1-p_2)/\rho + c_2^2/\alpha_2)$. Moreover, since 
$\nabla_{\bm{V}}u=(0, Y_2 W/\rho, Y_1, 0, -Y_1 W/\rho, Y_2, 0)^\top$, a direct computation yields $\nabla_{\bm{V}}u\cdot\bm{r}=0$.
\end{proof}

\subsection{Riemann invariants and jump conditions}

Solutions to the Riemann problem consist of constant states connected through either contact discontinuities associated to linearly degenerate waves, or shock or rarefaction waves associated to genuinely non-linear waves.
States that can be connected to each other through contact discontinuities or rarefactions waves can be parametrized by the Riemann invariants of the corresponding wave. Through such waves, the Riemann invariants remain constant.

\begin{proposition}[Riemann invariants]
Outside of resonance, each characteristic field admits a set of 6 Riemann invariants with linearly independent gradients. For each characteristic field, such a set of Riemann invariants is given by
\begin{subequations}
\begin{align}
u:\quad&\acco{u, \; \rho Y_1 Y_2 W, \; \alpha_1 p_1 + \alpha_2 p_2 + \rho Y_1 Y_2 W^2,\; h_1 - h_2 + \tfrac{1}{2}(Y_2 - Y_1)W^2, \; s_1,\; s_2},\label{eq:Riemann_invariants_interface}\\
u_1:\quad&\acco{\alpha_1,\; u_1,\; p_1,\; u_2,\; p_2,\; \rho_2} ,\\
u_1+c_1:\quad&\acco{\alpha_1,\;s_1,\; u_1-f_1(s_1,p_1) ,\;u_2,\; p_2,\; \rho_2} ,\\
u_1-c_1:\quad&\acco{\alpha_1,\;s_1,\: u_1+f_1(s_1,p_1) ,\;u_2,\; p_2,\; \rho_2} ,\\
u_2:\quad&\acco{\alpha_1,\; u_2,\; p_2,\; u_1,\; p_1,\; \rho_1} ,\\
u_2+c_2:\quad&\acco{\alpha_1,\; s_2,\; u_2-f_2(s_2,p_2),\; u_1,\; p_1,\; \rho_1} ,\\
u_2-c_2:\quad&\acco{\alpha_1,\; s_2,\; u_2+f_2(s_2,p_2),\; u_1,\; p_1,\; \rho_1} ,
\end{align}
where $f_k(s_k,p_k) = \int^{p_k} \brck{\rho_k (s_k,p )c_k(s_k,p )}^{-1}\mathrm{d}p$, for $k=1,2$.
\end{subequations}
\end{proposition}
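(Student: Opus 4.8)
The plan is to use the defining property of a Riemann invariant of a simple characteristic field: a smooth function $w(\bm{U})$ is a Riemann invariant of the field associated with the eigenvalue $\lambda$ and right eigenvector $\bm{r}$ if and only if $\nabla_{\bm{U}} w\cdot\bm{r}=0$. Outside of resonance all seven eigenvalues are simple, so each field admits exactly $7-1=6$ functionally independent Riemann invariants; the proposed sets are therefore maximal and it suffices to verify, for each field, that the six listed functions are annihilated by $\bm{r}$ and have linearly independent gradients. I would split the seven fields into the six \emph{phasic} fields ($u_k$ and $u_k\pm c_k$), whose eigenvectors (columns $2$--$7$ of \eqref{eq:eigenvectors}) have support confined to the slots $\{u_k,p_k\}$ or to a single entropy slot, and the \emph{interfacial} field $u$, whose eigenvector is the full first column.

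For the phasic fields the verification is essentially by inspection. For the contact $u_k$ the eigenvector is $\bm{e}_{s_k}$, so every function independent of $s_k$ is an invariant; the listed set is a coordinate chart in which $s_{k'}$ is traded for $\rho_{k'}$, which is legitimate as long as $\partial\rho_{k'}/\partial s_{k'}\neq 0$, and this simultaneously yields the linear independence of the six gradients. For the acoustic fields $u_k\pm c_k$ the eigenvector is $\bm{e}_{u_k}\pm\rho_k c_k\,\bm{e}_{p_k}$; the cross-phase variables, $\alpha_1$, and the phase's own entropy $s_k$ are trivially annihilated, while the nontrivial invariant $u_k\mp f_k(s_k,p_k)$ is built precisely so that $\partial_{p_k} f_k=(\rho_k c_k)^{-1}$ cancels the directional derivative, i.e. $\nabla(u_k\mp f_k)\cdot\bm{r}=1-1=0$. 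Linear independence again follows because each listed set completes to a chart once $s_{k'}$ is recovered from $\rho_{k'}$.

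The interfacial field $u$ is the crux, and rather than substitute the explicit first column of \eqref{eq:eigenvectors} directly, I would exploit its linearly degenerate character established in Proposition \ref{prop:characteristic_fields}. Being linearly degenerate, the eigenvalue $u$ is itself constant along the field (this is exactly the identity $\nabla u\cdot\bm{r}=0$ obtained there), so the integral curve of $\bm{r}$ is a contact discontinuity propagating at the single constant speed $u$, and a function is a Riemann invariant if and only if it is continuous across this contact. I would then impose the Rankine--Hugoniot conditions at speed $u$ for the three genuine conservation laws carried by the system---the two phasic masses and, after the $p_I\nabla\alpha_1$ and $(p\vel)_I\nabla\alpha_1$ terms cancel upon summation, the total momentum and total energy. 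Introducing $w_k=u_k-u$, the mass-weighted definition of $u$ forces $Y_1 w_1+Y_2 w_2=0$, hence $w_1=Y_2 W$, $w_2=-Y_1 W$, and the phasic relative mass fluxes become $\alpha_k\rho_k w_k=\pm\,\rho Y_1 Y_2 W$. Continuity of these fluxes gives the invariant $\rho Y_1 Y_2 W$; continuity of the total momentum flux, using $\sum_k\alpha_k\rho_k w_k=0$ and $\sum_k\alpha_k\rho_k w_k^2=\rho Y_1 Y_2 W^2$, collapses to continuity of $\alpha_1 p_1+\alpha_2 p_2+\rho Y_1 Y_2 W^2$; and the total energy flux simplifies, after the same substitutions and with $h_k=e_k+p_k/\rho_k$, to $\rho Y_1 Y_2 W\,[\,h_1-h_2+\tfrac12(Y_2-Y_1)W^2\,] + u\,[\,\alpha_1 p_1+\alpha_2 p_2+\rho Y_1 Y_2 W^2\,]$, so that dividing out the already-established invariants $\rho Y_1 Y_2 W$ and $u$ yields the Bernoulli-type invariant $h_1-h_2+\tfrac12(Y_2-Y_1)W^2$. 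The remaining two invariants $s_1,s_2$ are immediate, since the $s_1$ and $s_2$ entries of $\bm{r}$ vanish.

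It remains to check that these six functions have linearly independent gradients outside of resonance. Because $\nabla s_1,\nabla s_2$ already supply the $\bm{e}_{s_1},\bm{e}_{s_2}$ directions, this reduces to showing that the gradients of $u$, $\rho Y_1 Y_2 W$, $\alpha_1 p_1+\alpha_2 p_2+\rho Y_1 Y_2 W^2$ and $h_1-h_2+\tfrac12(Y_2-Y_1)W^2$ are independent modulo $\{\bm{e}_{s_1},\bm{e}_{s_2}\}$, i.e. that the associated $4\times 5$ Jacobian with respect to $(\alpha_1,u_1,p_1,u_2,p_2)$ has full rank $4$. I expect this rank computation---rather than the annihilation checks---to be the main obstacle: it is a finite but genuinely algebraic verification in which the non-resonance condition $u\neq u_k\pm c_k$ and thermodynamic nondegeneracy ($c_k>0$, $\partial\rho_k/\partial s_k\neq 0$) must be invoked to rule out the vanishing of the relevant minors, and it is also the point at which the argument should be cross-checked against the explicit eigenvector \eqref{eq:eigenvectors} to confirm that precisely one independent direction, namely $\bm{r}$ itself, is annihilated by all six gradients.
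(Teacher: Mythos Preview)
Your proposal is correct. For the six phasic fields you proceed exactly as the paper does, by direct inspection of the eigenvectors in \eqref{eq:eigenvectors}. For the interfacial field, however, your route is genuinely different: the paper stays with the defining condition $\nabla_{\bm{U}} w\cdot\bm r=0$ and carries out the computation in the alternative variables $\bm V=(\alpha_1,u_1,p_1,m_1,u_2,p_2,m_2)$ introduced in the proof of Proposition~\ref{prop:characteristic_fields}, invoking the thermodynamic identity
\[
c_k^2\,\partial_{p_k} h_k\big|_{\rho_k}+\partial_{\rho_k} h_k\big|_{p_k}=c_k^2/\rho_k
\]
to close the verification for the Bernoulli-type invariant. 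You instead exploit the linear degeneracy of the $u$-field to identify Riemann invariants with quantities continuous across the contact, and then read them off from the Rankine--Hugoniot relations of the genuinely conservative subsystem (phasic masses, total momentum, total energy). Your approach has the advantage of bypassing the explicit eigenvector and the enthalpy identity, and it makes the physical origin of each invariant transparent; its mild cost is that extracting $h_1-h_2+\tfrac12(Y_2-Y_1)W^2$ from the energy flux requires factoring out the already-established invariant $\rho Y_1 Y_2 W$, hence $W\neq 0$, so the identity must afterwards be confirmed at $W=0$ by continuity or by a direct check. Your explicit discussion of the linear-independence step, including the need for $\partial\rho_{k'}/\partial s_{k'}\neq 0$ in the phasic lists, is more detailed than what the paper records.
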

\begin{proof}
The Riemann invariants are obtained by direct computation using the expression of the eigenvectors \eqref{eq:eigenvectors}. For some of the Riemann invariants associated to the interface wave \eqref{eq:Riemann_invariants_interface}, it is more practical to use the variables $\bm{V}$, and the associated eigenvector, introduced in the proof of proposition \ref{prop:characteristic_fields}, in combination with the following thermodynamic identity
\begin{equation}
c_k^2\dfrac{\partial h_k}{\partial p_k}_{|\rho_k} + \dfrac{\partial h_k}{\partial \rho_k}_{|p_k} = \frac{c_k^2}{\rho_k}.
\end{equation}
\end{proof}

\begin{remark}
The interfacial pressure $p_I$, given by eq. \eqref{eq:interfacial_pressure}, is not continuous through the interfacial wave. Instead, it is a total pressure $\alpha_1 p_1 + \alpha_2 p_2 + \rho Y_1 Y_2 W^2$ which is continuous. This expression was chosen as interfacial pressure for a Baer-Nunziato model in \cite{saurel:2001}, but leads to a non-conservative equation for the total entropy equation and thus the model lacks an admissibility criterion for shocks. 
\end{remark}

The Riemann invariants are identical to the ones of the Baer-Nunziato system, see \cite{gallouet:2004}, except for those associated to the interfacial wave $u$.

In addition, similarly to the Baer-Nunziato models, the two-phases are coupled only through the non-conservative products which involve $\partial_x\alpha_1$. Owing to its linear degeneracy, the non-conservative products through the interfacial wave $u$ are entirely defined by the Riemann invariants \eqref{eq:Riemann_invariants_interface}. Moreover, the volume fraction remains constant on both sides of the interfacial wave, and thus the non-conservative products are inactive through shocks and the phases are decoupled \cite{gallouet:2004}. In particular, if we denote by $\jump{\phi}$ the jump of a variable $\phi$ through a discontinuity, we obtain the following jump conditions for the genuinely non-linear waves.

\begin{proposition}[Jump conditions]
For the genuinely non-linear waves, two states connected through a shock, propagating at speed $\sigma$, satisfy the following jump conditions,
\begin{subequations}
\begin{align}
\jump{\alpha_1} &= 0,\\
\jump{\alpha_k \rho_k\pth{u_k-\sigma}} &=0,\\
\jump{\alpha_k \rho_k u_k\pth{u_k-\sigma}} + \jump{\alpha_k p_k} &=0,\\
\jump{\alpha_k \rho_k E_k\pth{u_k-\sigma}} + \jump{\alpha_k p_k u_k} &=0,
\end{align}
for $k=1,2$.
\end{subequations}
\end{proposition}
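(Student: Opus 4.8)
The plan is to exploit the decoupling of the two phases across genuinely non-linear waves, which rests on the single fact that the volume fraction does not jump across such waves. I would organize the argument in three steps. The first, and only delicate, step is to show that $\jump{\alpha_1}=0$ across a shock associated with one of the acoustic fields $u_k\pm c_k$. The cleanest route uses the volume-fraction equation $\partial_t\alpha_1 + u\,\partial_x\alpha_1 = 0$, which expresses that $\alpha_1$ is merely transported at the interfacial velocity $u$; a discontinuity of $\alpha_1$ can therefore only propagate at speed $u$, and since the shock propagates at $\sigma=u_k\pm c_k$ with $\sigma\neq u$ by the non-resonance condition, $\alpha_1$ cannot jump across it. Equivalently, one reads this from the eigenvectors \eqref{eq:eigenvectors}: the interfacial eigenvector is the only one with a non-vanishing $\alpha_1$-component, so the Hugoniot locus of each acoustic field keeps $\alpha_1$ fixed, consistently with $\alpha_1$ being a Riemann invariant of those fields. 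What makes this rigorous despite the non-conservative character of the system, for which jump relations are a priori path-dependent in the sense of Dal Maso--LeFloch--Murat, is precisely the linear degeneracy of the interfacial field established in Proposition \ref{prop:characteristic_fields} together with non-resonance, so that the unique wave carrying a jump in $\alpha_1$ is separated in speed from every acoustic shock.

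Once $\jump{\alpha_1}=0$, and hence $\jump{\alpha_2}=0$, is secured, the second step is to observe that $\partial_x\alpha_1$ carries no Dirac mass at the acoustic shock: since $\alpha_1$ is constant on either side and its jump vanishes, the non-conservative products $p_I\partial_x\alpha_1$ and $(p\vel)_I\partial_x\alpha_1$ are inactive across this discontinuity. The two phasic subsystems then reduce to the conservative single-phase Euler equations scaled by the locally constant factor $\alpha_k$, namely $\partial_t\pth{\alpha_k\rho_k}+\partial_x\pth{\alpha_k\rho_k u_k}=0$, $\partial_t\pth{\alpha_k\rho_k u_k}+\partial_x\pth{\alpha_k\rho_k u_k^2+\alpha_k p_k}=0$, and $\partial_t\pth{\alpha_k\rho_k E_k}+\partial_x\pth{\alpha_k\rho_k H_k u_k}=0$, with $H_k=E_k+p_k/\rho_k$.

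The third step is then to apply the classical Rankine--Hugoniot relation $\jump{f}=\sigma\jump{q}$ to each of these three conservation laws. The mass law gives $\jump{\alpha_k\rho_k\pth{u_k-\sigma}}=0$; the momentum law, after writing $\alpha_k\rho_k u_k^2-\sigma\alpha_k\rho_k u_k=\alpha_k\rho_k u_k\pth{u_k-\sigma}$, gives $\jump{\alpha_k\rho_k u_k\pth{u_k-\sigma}}+\jump{\alpha_k p_k}=0$; and the energy law, using $\alpha_k\rho_k H_k u_k=\alpha_k\rho_k E_k u_k+\alpha_k p_k u_k$, gives $\jump{\alpha_k\rho_k E_k\pth{u_k-\sigma}}+\jump{\alpha_k p_k u_k}=0$. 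Together with $\jump{\alpha_1}=0$ these are exactly the asserted jump conditions, for $k=1,2$. I expect Steps two and three to be entirely routine; the sole genuine difficulty is the rigorous justification in Step one that the volume fraction is inert across a genuinely non-linear shock, which is where the interplay between the linear degeneracy of the interfacial wave and the non-resonance hypothesis is indispensable, and which follows the analyses of the Baer--Nunziato system in \cite{gallouet:2004,coquel:2013}.
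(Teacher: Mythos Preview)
Your proposal is correct and follows essentially the same approach as the paper: the paper's proof is a single line, ``Direct application of the Rankine-Hugoniot jump conditions when $\partial_x\alpha_1 =0$,'' with the justification that $\alpha_1$ is constant across the acoustic waves (your Step one) given in the discussion immediately preceding the proposition, by reference to the linear degeneracy of the interfacial field and \cite{gallouet:2004}. Your write-up simply makes explicit the reasoning the paper leaves to that surrounding text and to the cited reference.
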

\begin{proof}
Direct application of the Rankine-Hugoniot jump conditions when $\partial_x\alpha_1 =0$.
\end{proof}

In Section \ref{sec:Model_derivation}, we have imposed the transport of specific entropies for each phase as a constraint in the variational derivation of the model. Consequently, the model admits a supplementary conservation law for the physical entropy which allows to select physically admissible shocks.
A shock is said to be admissible if it satisfies the following entropy inequality, 
\begin{equation}\label{eq:entropy_inequality}
\partial_t\pth{\alpha_1\rho_1 s_1 + \alpha_2 \rho_2 s_2} + \partial_x \pth{\alpha_1\rho_1 s_1 u_1 + \alpha_2 \rho_2 s_2 u_2} \geq 0,
\end{equation}
and thus the mathematical entropy is given by the opposite of the physical entropy. Note that when written in terms of mixture velocity $\vel$ and relative velocity $\bm{W}$, eq. \eqref{eq:entropy_inequality} corresponds to \eqref{eq:entropy_eq_u_W}.

\begin{remark}
The mathematical entropy is convex, but not strictly convex as shown in \cite{coquel:2013}.
\end{remark}

For the Baer-Nunziato models, which have $(p\vel)_I=p_I \vel_I$, the availability of an entropy conservation law is not guaranteed and depends on the closures for the interfacial quantities $\vel_I$ and $p_I$ \cite{gallouet:2004}. In particular, with the Baer-Nunziato model, when considering the interfacial velocity to be the mass-weighted average velocity as we do, $\vel_I=\vel$, then a conservation law for the system's entropy is obtained only with the closure \eqref{eq:p_I_BN} for the interfacial pressure $p_I$. As this closure depends explicitly on the phasic temperatures, it results that the solution to the Riemann problem depends on quantities such as the phasic heat capacities $c_{v,k} = \partial e_k / \partial {T_k}_{|\rho_k}$. Such a dependence is unexpected in classical mechanics, for instance, the solution of the Riemann problem for the Euler equations does not depend on such thermal quantities. In our model, the closure \eqref{eq:interfacial_work} provided by the variational principle for the interfacial work $(p\vel)_I$ allows to overcome this issue as it provides the mathematical properties for the proper treatment of the Riemann problem, without inducing non-physical behaviour.

\subsection{Symmetrization}

The previous results concern weak solutions, and in particular, we have shown that shocks are uniquely defined despite the non-conservative nature of the system. Here, we tackle the question of its well-posedness for smooth initial data, in the sense of Sobolev regularity. Friedrichs introduced an important class of symmetric hyperbolic systems \cite{friedrichs:1954}, for which local-in-time well-posedness results from Kato's theorem \cite{kato:1975}. These results then extend to symmetrizable systems. 

In \cite{coquel:2013}, the Baer-Nunziato models have been shown to be symmetrizable when out of resonance, independently of the closures for the interfacial quantities. Although the derived model is not a Baer-Nunziato model, it is, in some sense, an extension of this class of models and thus we have the following proposition.

\begin{proposition}[Symmetrizability]\label{prop:symmetrization}
 The system is symmetrizable when the non-resonance condition is satisfied.
\end{proposition}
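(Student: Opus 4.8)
The plan is to exhibit an explicit symmetrizer built from the eigenstructure already obtained in Proposition~\ref{prop:hyperbolicity}, rather than from the system's entropy. Recall that the quasilinear system \eqref{eq:quasilinear_form} is symmetrizable if there exists a smooth field $\bm{U}\mapsto\mathbf{S}_0(\bm{U})$ of symmetric positive definite matrices such that $\mathbf{S}_0(\bm{U})\mtrx{M}(\bm{U})$ is symmetric. Indeed, multiplying \eqref{eq:quasilinear_form} on the left by such an $\mathbf{S}_0$ casts it in the form $\mathbf{S}_0\partial_t\bm{U} + \pth{\mathbf{S}_0\mtrx{M}}\partial_x\bm{U}=\bm{0}$, which is a Friedrichs symmetric hyperbolic system \cite{friedrichs:1954}; local-in-time well-posedness in Sobolev spaces then follows from Kato's theorem \cite{kato:1975}. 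The task thus reduces to constructing such an $\mathbf{S}_0$ on the non-resonant set.

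Under the non-resonance condition, Proposition~\ref{prop:hyperbolicity} ensures that $\mtrx{M}(\bm{U})$ is diagonalizable with real spectrum, i.e. $\mtrx{M}=\mathbf{R}\,\Lambda\,\mathbf{R}^{-1}$, where $\mathbf{R}=\mathbf{R}(\bm{U})$ is the eigenvector matrix given explicitly in \eqref{eq:eigenvectors} and $\Lambda$ is the diagonal matrix collecting the eigenvalues $u$, $u_k$ and $u_k\pm c_k$. I would then set
\begin{equation*}
\mathbf{S}_0(\bm{U}) = \pth{\mathbf{R}(\bm{U})\,\mathbf{R}(\bm{U})^\top}^{-1}.
\end{equation*}
This matrix is symmetric by construction and positive definite because $\mathbf{R}$ is invertible off resonance. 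Writing $\mathbf{L}=\mathbf{R}^{-1}$ for the left-eigenvector matrix, so that $\mathbf{S}_0=\mathbf{L}^\top\mathbf{L}$ and $\mtrx{M}=\mathbf{R}\Lambda\mathbf{L}$, one computes $\mathbf{S}_0\mtrx{M}=\mathbf{L}^\top\mathbf{L}\mathbf{R}\Lambda\mathbf{L}=\mathbf{L}^\top\Lambda\mathbf{L}$ using $\mathbf{L}\mathbf{R}=\Id$, and this is symmetric since $\Lambda$ is diagonal. Hence $\mathbf{S}_0$ symmetrizes $\mtrx{M}$ pointwise.

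It remains to verify that $\mathbf{S}_0(\bm{U})$ depends smoothly on $\bm{U}$ throughout the non-resonant region, which is what the PDE statement requires. The only entries of $\mathbf{R}$ in \eqref{eq:eigenvectors} that could degenerate are those of its first column, through the factors $(u_k-u)^2-c_k^2$ in the denominators; these vanish precisely at resonance and are therefore bounded away from zero on any compact subset of the non-resonant set, while all remaining entries are smooth functions of the kinematic and thermodynamic variables. Consequently $\mathbf{R}$ is smooth and uniformly invertible there, and so is $\mathbf{S}_0$, which completes the construction.

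The main obstacle is conceptual rather than computational: the natural route would be to symmetrize through the mathematical entropy via the Godunov--Mock theorem, but as noted earlier this entropy is convex without being \emph{strictly} convex \cite{coquel:2013}, so its Hessian is only positive semidefinite and cannot serve as $\mathbf{S}_0$. This is exactly the difficulty resolved in \cite{coquel:2013} for the Baer--Nunziato models, and it is why I favour the eigenvector-based construction above, whose validity hinges solely on the explicit diagonalization already established. The genuine work is then limited to checking the symmetry of $\mathbf{S}_0\mtrx{M}$ and the uniform invertibility of $\mathbf{R}$ away from resonance, both of which are direct consequences of the expressions at hand.
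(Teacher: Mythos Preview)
Your argument is correct but follows a genuinely different route from the paper. You exploit the explicit diagonalization $\mtrx{M}=\mathbf{R}\Lambda\mathbf{R}^{-1}$ established in Proposition~\ref{prop:hyperbolicity} and take the canonical symmetrizer $\mathbf{S}_0=(\mathbf{R}\mathbf{R}^\top)^{-1}=\mathbf{L}^\top\mathbf{L}$, so that $\mathbf{S}_0\mtrx{M}=\mathbf{L}^\top\Lambda\mathbf{L}$ is automatically symmetric; smoothness and invertibility away from resonance then follow directly from the explicit entries of \eqref{eq:eigenvectors}. The paper instead adapts the construction of \cite{coquel:2013}: it posits a block-structured symmetrizer $\mtrx{P}$ whose diagonal blocks $\mtrx{P}_k=\mathrm{diag}(\rho_k c_k^2,1,1)$ reflect the phase decomposition, solves two $2\times2$ linear systems (invertible precisely off resonance) to fix the coupling vectors $\bm{y}_{k,\alpha}$ making $\mtrx{P}\mtrx{M}$ symmetric, and finally chooses the corner entry $\theta_\alpha$ large enough to force $\mtrx{P}$ positive definite via a discriminant/Cauchy--Schwarz argument. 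Your approach is shorter and entirely general --- it is essentially the statement that diagonalizability with real spectrum implies Friedrichs symmetrizability --- whereas the paper's construction yields a symmetrizer that respects the two-phase block structure of $\mtrx{M}$ and makes explicit where the non-resonance condition enters algebraically, which can be advantageous for structure-preserving energy estimates or for extensions to related models.
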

\begin{proof}
We follow and adapt the proof of the symmetrizability of the Baer-Nunziato system proposed in \cite{coquel:2013}. We express the system in the same quasilinear form \eqref{eq:quasilinear_form} as in the proof of proposition \ref{prop:hyperbolicity} and exhibit a symmetric positive definite matrix $\mtrx{P}(\bm{U})$ such that $\mtrx{P}(\bm{U})\mtrx{M}(\bm{U})$ is symmetric. A symmetric form of the system is then given by
\begin{equation}
\mtrx{P}(\bm{U})\partial_t\bm{U} + \mtrx{P}(\bm{U})\mtrx{M}(\bm{U})\partial_x\bm{U}=\bm{0}.
\end{equation}
Setting $\mtrx{P}_k = \mathrm{diag}(\rho_k c_k^2,1,1)$, we consider 
\begin{equation}
\mtrx{P} = \begin{bmatrix}
 \theta_{\alpha} & \bm{y}_{1,\alpha}^\top & \bm{y}_{2,\alpha}^\top \\
 \bm{y}_{1,\alpha} & \mtrx{P}_1 & \bm{0}_{3\times3}\\
 \bm{y}_{2,\alpha} & \bm{0}_{3\times3} & \mtrx{P}_2
 \end{bmatrix},
\end{equation}
where $\theta_\alpha\in\mathbb{R}$ and $\bm{y}_{1,\alpha},\,\bm{y}_{2,\alpha}\in\mathbb{R}^3$ are to be determined. The requirement $\mtrx{P}(\bm{U})\mtrx{M}(\bm{U})$ symmetric is equivalent to
\begin{equation}
\pth{\mtrx{M}_k^\top - u\mtrx{I}_3}\bm{y}_{k,\alpha} = \mtrx{P}_k\bm{z}_{k,\alpha}, \qquad k=1,2.
\end{equation}
This is a $3\times3$ linear system whose last line yields a trivial equation and allows to set $\bm{y}_{k,\alpha}^{(3)}=0$, the remaining $2\times2$ system writes
\begin{equation}
\begin{bmatrix}
u_k- u & \rho_k c_k^2 \\
1/\rho_k & u_k - u
\end{bmatrix}
\begin{bmatrix}
\bm{y}_{k,\alpha}^{(1)}\\
\bm{y}_{k,\alpha}^{(2)}
\end{bmatrix}
= \begin{bmatrix}
1 & 0 & 0\\
0 & 1 & 0 
\end{bmatrix}
\mtrx{M}_k \bm{z}_{k,\alpha}, \qquad k=1,2.
\end{equation}
When the non-resonance condition is satisfied, this linear system is invertible and yields a closed expression for $\bm{y}_{k,\alpha}$, for $k=1,2$, such that $\mtrx{P}(\bm{U})\mtrx{M}(\bm{U})$ is symmetric. It remains to determine $\theta_\alpha$ such that $\mtrx{P}(\bm{U})$ is positive definite. For any $\bm{X}=(a,\bm{b}_1^\top,\bm{b}_2^\top)\in\mathbb{R}^7$, we have that
\begin{equation}
\bm{X}^\top \mtrx{P}(\bm{U}) \bm{X}  
= a^2 \theta_\alpha + 2a\pth{\bm{y}_{1,\alpha}^\top\bm{b}_1 + \bm{y}_{2,\alpha}^\top\bm{b}_2} + \pth{\bm{b}_1^\top\mtrx{P}_1\bm{b}_1 + \bm{b}_2^\top\mtrx{P}_2\bm{b}_2},\label{eq:spd_polynome}
\end{equation}
This is a second-order polynomial with respect to $a$ and its discriminant $\Delta$ writes
\begin{equation}
\Delta = 4\brck{\pth{\bm{y}_{1,\alpha}^\top\bm{b}_1 + \bm{y}_{2,\alpha}^\top\bm{b}_2}^2 - \theta_\alpha\pth{\bm{b}_1^\top\mtrx{P}_1\bm{b}_1 + \bm{b}_2^\top\mtrx{P}_2\bm{b}_2}}.
\end{equation}
As $\mtrx{P}_k$ is symmetric positive definite, its square root, $\mtrx{P}_k^{1/2}$, is also symmetric positive definite and we may set $\overline{\bm{b}}_k = \mtrx{P}_k^{1/2}\bm{b}_k$, for $k=1,2$, such that
\begin{equation}
\frac{\Delta}{4} = \pth{\bm{y}_{1,\alpha}^\top\mtrx{P}_1^{-1/2}\overline{\bm{b}}_1 + \bm{y}_{2,\alpha}^\top\mtrx{P}_2^{-1/2}\overline{\bm{b}}_2}^2 - \theta_\alpha \pth{ |\overline{\bm{b}}_1|^2 + |\overline{\bm{b}}_2|^2}.
\end{equation}
Applying the Cauchy-Schwarz inequality in $\mathbb{R}^6$ to $\pth{\overline{\bm{b}}_1^\top,\overline{\bm{b}}_2^\top}$ and $\pth{(\mtrx{P}_1^{-1/2}\bm{y}_{1,\alpha})^\top,(\mtrx{P}_2^{-1/2}\bm{y}_{2,\alpha}^\top)}$ then yields
\begin{equation}
\frac{\Delta}{4} \leq  \pth{\left|\mtrx{P}_1^{-1/2}\bm{y}_{1,\alpha}\right|^2 + \left|\mtrx{P}_2^{-1/2}\bm{y}_{2,\alpha}\right|^2 - \theta_\alpha} \pth{ |\overline{\bm{b}}_1|^2 + |\overline{\bm{b}}_2|^2}.
\label{eq:majoration_discriminant}
\end{equation}
Choosing $\theta_\alpha >\left|\mtrx{P}_1^{-1/2}\bm{y}_{1,\alpha}\right|^2 + \left|\mtrx{P}_2^{-1/2}\bm{y}_{2,\alpha}\right|^2$ results in a non-positive discriminant so that $\bm{X}^\top\mtrx{P}(\bm{U})\bm{X}\geq0$. Moreover, if $\bm{X}^\top\mtrx{P}(\bm{U})\bm{X}=0$, then necessarily $\Delta=0$ and so by \eqref{eq:majoration_discriminant}, $\overline{\bm{b}}_k=0$. It is then immediate that $\bm{b}_k=0$, and by \eqref{eq:spd_polynome} it follows that $a=0$, so that $\bm{X}=0$. As a result, we may choose $\theta_\alpha$ such that $\mtrx{P}(\bm{U})$ is symmetric positive definite and symmetrizes the quasilinear form \eqref{eq:quasilinear_form} of the system.
\end{proof}

\begin{remark}
For conservative systems, the existence of a supplementary conservation law for a strictly convex entropy implies symmetrizability of the system \cite{godunov:2025,friedrichs:1971}. However, our model does not admit a conservative form, nor is its entropy strictly convex. In \cite{forestier:2011}, the symmetrizability of non-conservative systems having a supplementary conservation law has been investigated.  It was shown that strict convexity of the mathematical entropy is required only for a subset of the system's variables, but additional conditions need to be satisfied to deal with the non-conservative terms. In the case of our model, this additional condition corresponds to the non-resonance condition. Here, the supplementary conservation law is provided by variational principle, however, for more general conditions concerning the existence of an additional conservation law for non-conservative systems, we refer to \cite{cordesse:2020}.
\end{remark}

As a result of its symmetrizability, for initial conditions far from resonance, the Cauchy problem admits a local-in-time smooth solution \cite{kato:1975}. The smooth solution exists until it either becomes resonant or has derivatives which become unbounded in $L^\infty$.

\subsection{Some comments on the multi-dimensional case}

Owing to the model's rotational invariance, its hyperbolicity in the multi-dimensional case may be studied only in the $x$-direction. If the lift forces are neglected and one considers the simplified system \eqref{syst:energy_form_without_lift}, then the model remains hyperbolic. It has the same eigenvalues as in the one-dimensional case but the multiplicity of the convective waves $\vel_{1,x}$ and $\vel_{2,x}$ is increased by the number of tangential components. Moreover, the nature of the fields remains identical and thus the previous discussion concerning shocks and non-conservative products remains valid.

If one considers the system including lift forces, i.e. system \eqref{syst:energy_form_with_lift}, then the model is only weakly hyperbolic. Again, the model has the same eigenvalues as in the one-dimensional case, and the multiplicity of the convective waves $\vel_{1,x}$ and $\vel_{2,x}$ is also increased by the number of tangential components. However, owing to the additional equation \eqref{syst:v_eq} for the supplementary variable $\bm{v}$, the multiplicity of the interfacial eigenvalue $u_x$ is also increased but one eigenvector is missing and we do not have a full basis of eigenvectors. 

This is similar to the Thermodynamically Compatible two-phase models \cite{romenski:2007,romenski:2010}, which are also weakly hyperbolic in the multi-dimensional case. In fact, weak hyperbolicity is recurrent for systems which have an involution constraint such as $\rot(\bm{v})=\bm{0}$, see for instance \cite{romenski:2010,dhaouadi:2024}. Several techniques to restore hyperbolicity have been developed, such as curl-cleaning \cite{dumbser:2020}, and Godunov-Powell symmetrization \cite{godunov:2025b,powell:1999} but have not been investigated in the present work.

\section{Conclusion}\label{sec:Conclusions}
In this paper, the generic Stationary Action Principle framework presented in \cite{haegeman_GenSAP:2025} has been extended to the case of two families of trajectories. The new framework allows for the derivation of the equations of motion in two-velocity models for two-phase flows and guarantees the conservation of total momentum and total energy. 

The newly developed framework is then used for the derivation of a novel \textit{all-topology} two-fluid model. The model is reminiscent of the class of Baer-Nunziato models that it extends through the introduction of a new interfacial quantity, namely the interfacial work. However, contrary to the Baer-Nunziato models, for which many different closures have been considered, our model is fully closed as the sole assumption on the interfacial velocity, in the variational principle, suffices to close the remaining interfacial quantities. The interfacial velocity is given by the mixture velocity, i.e. the mass-weighted average velocity, allowing it to be well-defined and have a meaningful expression regardless of flow conditions. Moreover, the resulting expressions for the interfacial pressure and interfacial work are physically sound and allow to clearly distinguish the various mechanical effects from the thermodynamic ones. 

In the one-dimensional setting, the model is shown to be hyperbolic under a classical non-resonance condition and is symmetrizable, allowing for local-in-time well-posedness for smooth solutions. Moreover, despite its non-conservative nature, its non-conservative products are uniquely defined through discontinuities and an entropy conservation law provides a selection criteria for physically admissible shocks. 

In the multi-dimensional setting, lift forces are present in the phasic momentum equations and are driven by a new potential velocity which has its own evolution equation and the resulting model is weakly hyperbolic. However, in the absence of rotational energies in the Lagrangian, a clear interpretation of the obtained lift forces is not yet available and requires further investigation. Consequently, as they vanish at equilibrium, we may choose to neglect these forces for flows near equilibrium at the present time, and in doing so, we obtain a simplified model which inherits the mathematical properties proved in the one-dimensional case.

It is, to the best of our knowledge, the first \textit{all-topology} multi-fluid model that has both a complete physical consistency and the required mathematical structure to properly describe shock-type solutions. We have thus been able to reconcile approaches based primarily on mathematics and approaches based primarily on physics to obtain an all-topology model to describe the complexity of compressible two-phase flows that potentially involve shocks.

 The contribution of this paper is the development of a model, but this model lends itself well to simulations even though this is not within the scope of the paper. Readers can find some simple numerical simulations at the address indicated\footnote{\url{https://hpc-maths.github.io/2025_09_two_fluid_all_topology/} }, along with the codes to reproduce them. Future works will concern the development of accurate approximate Riemann solvers for the model and further investigations in the modelling of lift forces. \vskip6pt

\enlargethispage{20pt}

\ack{The authors would like to thank Jean-Marc Hérard for fruitful discussions on related topics. We also thank the two anonymous reviewers for their comments. This work has been supported by the Agence Innovation D{\'e}fense (AID), the CIEDS project OPEN NUM DEF (PI. L. Gouarin, M. Massot, T. Pichard), a PhD Grant from ONERA (W. Haegeman), as well as the HPC@Maths Initiative (PI. L. Gouarin and M. Massot) of the Fondation École polytechnique, which we would like to acknowledge.}

\appendix

\section{Derivation of the generic system through Stationary Action principle}\label{app:SAP_proof}

\subsection{Proof of Theorem \ref{thrm:SAP}}
We provide here a proof of the generic equations \eqref{eq:generic_system_Euler_Lagrange} obtained by the Stationary Action Principle \eqref{eq:stationary_action}, for the generic Lagrangian \eqref{eq:generic_Lagrangian}. The variation of the action $\Act$ is given by
\begin{multline}\label{A:variation_of_action}
\delta\Act = \sum_{k=1}^{2}\txint \acco{\dfrac{\partial\Lag_k}{\partial m_k}\delta m_k + \dfrac{\partial\Lag_k}{\partial\vel_k}\delta\vel_k + \sum_{i=1}^{n_k}\dfrac{\partial\Lag_k}{\partial b_{k,i}}\delta b_{k,i} + \sum_{i=1}^{n_{k'}} \dfrac{\partial\Lag_{k}}{\partial b_{k',i}}\delta b_{k',i}} \mathrm{d}\bm{x}\mathrm{d}t\\
 + \sum_{k=1}^{2}\sum_{j=1}^{m}\txint \acco{\dfrac{\partial\Lag_k}{\partial\xi_j}\delta\xi_j + \dfrac{\partial\Lag_k}{\partial\pth{\nabla\xi_j}}\delta\pth{\nabla\xi_j} + \dfrac{\partial\Lag_k}{\partial\pth{\Dt^k\xi_j}}\delta\pth{\Dt^k\xi_j}}\mathrm{d}\bm{x}\mathrm{d}t.
\end{multline}
Here and in the sequel, $k'=3-k$ and is the opposite phase of phase $k$. For the sake of convenience on the notation, the differential element $\mathrm{d}\bm{x}\mathrm{d}t$ will be dropped in the remaining of the proof. The perturbations appearing in \eqref{A:variation_of_action} do not form free family as they are related to the independent perturbations $\delta\Traj_1$, $\delta\Traj_2$ and $\delta\xi_j$ through eqs. \eqref{eq:SAP_variations}. Using these expressions, the various terms in the previous equation write,
\begin{subequations}
\begin{align}
&\txint\dfrac{\partial\Lag_k}{\partial m_k}\delta m_k =  -\txint\dfrac{\partial\Lag_k}{\partial m_k}\nabla\cdot\pth{m_k\delta\Traj_k} = \txint\delta\Traj_k^\top m_k\nabla\pth{\dfrac{\partial\Lag_k}{\partial m_k}} ,\\
&\txint\dfrac{\partial\Lag_k}{\partial b_{\flat,i}} \delta b_{\flat,i} = - \txint\delta\Traj_\flat^\top\dfrac{\partial\Lag_k}{\partial b_{\flat,i}} \nabla b_{\flat,i} , \qquad \flat=k,k',
\end{align}
and
\begin{equation}
\begin{split}
\txint\dfrac{\partial\Lag_k}{\partial\vel_k}\delta\vel_k &= \txint\dfrac{\partial\Lag_k}{\partial\vel_k}\brck{\Dt^k\pth{\delta\Traj_k}-\pth{\partial_{\bm{x}}\vel_k}\delta\Traj_k}\\
&= -\txint\delta\Traj_k^\top\brck{\partial_t\bm{K}_k + \nabla\cdot\pth{\bm{K}_k\otimes\vel_k} + \pth{\nabla\vel_k}\bm{K}_k}.
\end{split}
\end{equation}
\end{subequations}
Recall that the perturbations are assumed to vanish at the boundary and therefore no boundary integral contribution arises. The independent variables contribute through the following terms
\begin{equation}
\txint\dfrac{\partial\Lag_k}{\partial\pth{\nabla\xi_j}}\delta\pth{\nabla\xi_j} = \txint\dfrac{\partial\Lag_k}{\partial\pth{\nabla\xi_j}}\nabla\pth{\delta\xi_j} = -\txint\delta\xi_j\nabla\cdot\pth{\bm{D}_{k,j}},
\end{equation}
and
\begin{equation}
\begin{split}
\txint\dfrac{\partial\Lag_k}{\partial\pth{\Dt^k\xi_j}}\delta\pth{\Dt^k\xi_j} &= \txint\mathscr{M}_{k,j}\Dt^k\pth{\delta\xi_j} + \mathscr{M}_{k,j}\nabla\xi_j^\top\brck{\Dt^k\pth{\delta\Traj_k} - \pth{\partial_{\bm{x}}\vel_k}\delta\Traj_k}\\
&= -\txint\delta\Traj_k^\top\bigg[\partial_t\pth{\mathscr{M}_{k,j}\nabla\xi_j} + \nabla\cdot\pth{\mathscr{M}_{k,j}\nabla\xi_j\otimes\vel_k} \\
& \hspace{40pt}  + \pth{\nabla\vel_k}\mathscr{M}_{k,j}\nabla\xi_j\bigg] +\delta\xi_j\brck{\partial_t\mathscr{M}_{k,j}+\nabla\cdot\pth{\mathscr{M}_{k,j}\vel_k}}.
\end{split}
\end{equation}
Combining these expressions, we get
\begin{multline}
\delta\Act = -\sum_{k=1}^{2}\txint\delta\Traj_k^\top\bigg\{ \partial_t\bm{K}_k + \nabla\cdot\pth{\bm{K}_k\otimes\vel_k} + \pth{\nabla\vel_k}\bm{K}_k 
+ \sum_{i=1}^{n_k}\brck{\dfrac{\partial\Lag_k}{\partial b_{k,i}} +\dfrac{\partial \Lag_{k'}}{\partial b_{k,i}}}\nabla b_{k,i} \\
- m_k\nabla\pth{\dfrac{\partial\Lag_k}{\partial m_k}}  + \sum_{j=1}^{m}\brck{ \partial_t\pth{\mathscr{M}_{k,j}\nabla\xi_j} + \nabla\cdot\pth{\mathscr{M}_{k,j}\nabla\xi_j\otimes\vel_k}  + \pth{\nabla\vel_k}\mathscr{M}_{k,j}\nabla\xi_j}\bigg\} \\
- \sum_{j=1}^{m}\txint\delta\xi_j\bigg\{\partial_t\pth{\mathscr{M}_{1,j}+\mathscr{M}_{2,j}}+\nabla\cdot\pth{\mathscr{M}_{1,j}\vel_1 + \mathscr{M}_{2,j}\vel_2 + \bm{D}_{1,j} + \bm{D}_{2,j}} \\ -\dfrac{\partial\Lag_1}{\partial\xi_j} -\dfrac{\partial\Lag_2}{\partial\xi_j} \bigg\}
\end{multline}
The critical points of the Action then satisfy $\delta\Act=0$, for any compactly supported perturbations $\delta\Traj_1$, $\delta\Traj_2$ and $\delta\xi_j$. In particular, the perturbations $\delta\xi_j$ yield the generic equations \eqref{eq:generic_system_xi_eq} for $j=1,\ldots,m$. The perturbations $\delta\Traj_k$, $k=1,2$, yield
\begin{multline}\label{A:raw_K_eq}
\partial_t\bm{K}_k + \nabla\cdot\pth{\bm{K}_k\otimes\vel_k} + \pth{\nabla\vel_k}\bm{K}_k - m_k\nabla\pth{\dfrac{\partial\Lag_k}{\partial m_k}}  + \sum_{i=1}^{n_k}\brck{\dfrac{\partial\Lag_k}{\partial b_{k,i}} +\dfrac{\partial \Lag_{k'}}{\partial b_{k,i}}}\nabla b_{k,i} \\
 + \sum_{j=1}^{m} \brck{\partial_t\pth{\mathscr{M}_{k,j}\nabla\xi_j} + \nabla\cdot\pth{\mathscr{M}_{k,j}\nabla\xi_j\otimes\vel_k}  + \pth{\nabla\vel_k}\mathscr{M}_{k,j}\nabla\xi_j} = 0.
\end{multline}
The sum indexed by $j$ on the second line of the previous relation simplifies into
\begin{equation}
\sum_{j=1}^{m}\acco{\brck{\partial_t\mathscr{M}_{k,j} + \nabla\cdot\pth{\mathscr{M}_{k,j}\vel_k}}\nabla\xi_j + \mathscr{M}_{k,j}\nabla\pth{\Dt^k\xi_j}}.
\end{equation}
Moreover, we have that
\begin{multline}
\nabla\Lag_k = \dfrac{\partial\Lag_k}{\partial m_k}\nabla m_k + \pth{\nabla\vel_k}\bm{K}_k + \sum_{i=1}^{n_k}\dfrac{\partial\Lag_k}{\partial b_{k,i}}\nabla b_{k,i} + \sum_{i=1}^{n_{k'}}\dfrac{\partial\Lag_k}{\partial b_{k',i}}\nabla b_{k',i}  \\
+ \sum_{j=1}^{m}\acco{\dfrac{\partial\Lag_k}{\partial\xi_j}\nabla\xi_j + \pth{\nabla\nabla\xi_j}\bm{D}_{k,j} + \mathscr{M}_{k,j}\nabla\pth{\Dt^k\xi_j}}.
\end{multline}
Plugging these expressions into \eqref{A:raw_K_eq}, we obtain
\begin{multline}
\partial_t\bm{K}_k + \nabla\pth{\bm{K}_k\otimes\vel_k} + \nabla\pth{\Lag_k-m_k\dfrac{\partial\Lag_k}{\partial m_k}}   -\sum_{i=1}^{n_{k'}}\dfrac{\partial\Lag_k}{\partial b_{k',i}}\nabla b_{k',i} + \sum_{i=1}^{n_k}\dfrac{\partial\Lag_{k'}}{\partial b_{k,i}}\nabla b_{k,i} \\
+\sum_{j=1}^{m}\acco{\brck{\partial_t \mathscr{M}_{k,j} + \nabla\cdot\pth{\mathscr{M}_{k,j}\vel_k}  - \dfrac{\partial\Lag_k}{\partial\xi_j}}\nabla\xi_j - \pth{\nabla\nabla\xi_j}\bm{D}_{k,j}} = 0.
\end{multline}
We then use the fact that $\pth{\nabla\nabla\xi_j}\bm{D}_{k,j} = \nabla\cdot\pth{\nabla\xi_j\otimes\bm{D}_{k,j}} - \pth{\nabla\cdot\bm{D}_{k,j}}\nabla\xi_j$ and rearrange the terms to obtain the generic momentum equations \eqref{eq:generic_system_K1_eq} and \eqref{eq:generic_system_K2_eq}.

\subsection{Proof of Theorem \ref{thrm:Hamiltonian}}
We provide a proof of the conservation law \eqref{eq:generic_hamiltonian_eq} for the Hamiltonian verified by the generic system. We derive the equations for the partial Hamiltonians \eqref{eq:generic_partial_hamiltonian_eq}. Equation \eqref{eq:generic_hamiltonian_eq} is then obtained by summation over the phases. Owing to the definition of the partial Hamiltonian $\Ham_k$ \eqref{eq:generic_partial_hamiltonian_def}, we have
\begin{equation}
m_k\Dt^k\pth{\frac{\Ham_k}{m_k}} = m_k\Dt^k\pth{\frac{\bm{K}_k^\top\vel_k}{m_k}} + \sum_{j=1}^{m}m_k\Dt^k\pth{\dfrac{\mathscr{M}_{k,j}\Dt^k\xi_j}{m_k}} - m_k\Dt^k\pth{\dfrac{\Lag_k}{m_k}}.
\end{equation}
We now derive a more suitable expression for $m_k\Dt^k\pth{\Lag_k/m_k} = \Dt^k\Lag_k - \Lag_k \Dt^k m_k/ m_k$. Using the chain rule, we have
\begin{multline}
\Dt^k\Lag_k = m_k\dfrac{\partial\Lag_k}{\partial m_k}\frac{\Dt^k m_k}{m_k} + \dfrac{\partial\Lag_k}{\partial\vel_k}\Dt^k\vel_k + \sum_{i=1}^{n_{k'}}\frac{\partial\Lag_k}{\partial b_{k,i}}\Dt^k b_{k,i} \\ + \sum_{j=1}^{m}\acco{\dfrac{\partial\Lag_k}{\partial\xi_j}\Dt^k\xi_j + \dfrac{\partial\Lag_k}{\partial\pth{\nabla\xi_j}}\Dt^k\pth{\nabla\xi_j} + \dfrac{\partial\Lag_k}{\partial\pth{\Dt^k\xi_j}}\Dt^k\pth{\Dt^k\xi_j}}
\end{multline}
Moreover, we have the following identities
\begin{align}
\dfrac{\partial\Lag_k}{\partial\vel_k}\Dt^k\vel_k &= m_k\Dt^k\pth{\frac{\bm{K}_k^\top\vel_k}{m_k}} - m_k\vel_k^\top\Dt^k\pth{\frac{\bm{K}_k}{m_k}},\\\dfrac{\partial\Lag_k}{\partial\pth{\Dt^k\xi_j}}\Dt^k\pth{\Dt^k\xi_j} &= m_k\Dt^k\pth{\frac{\mathscr{M}_{k,j}\Dt^k\xi_j}{m_k}} - m_k\pth{\Dt^k\xi_j}\Dt^k\pth{\frac{\mathscr{M}_{k,j}}{m_k}}.
\end{align}
Using these identities as well as the generic momentum equation for $\bm{K}_k$, it results that
\begin{multline}\label{A:DtkLag_k_big_expression}
m_k\Dt^k\pth{\frac{\Lag_k}{m_k}} = m_k\Dt^k\pth{\frac{\bm{K}_k^\top\vel_k}{m_k}} + \sum_{j=1}^{m}m_k\Dt^k\pth{\dfrac{\mathscr{M}_{k,j}\Dt^k\xi_j}{m_k}} \underbrace{- \Lag_k^\star\pth{\nabla\cdot\vel_k} - \vel_k^\top\nabla\Lag_k^\star}_{=T_1}\\
+\underbrace{\sum_{i=1}^{n_{k'}}\dfrac{\partial\Lag_k}{\partial b_{k',i}}\Dt^k b_{k',i} + \sum_{i=1}^{n_k}\frac{\partial\Lag_{k'}}{\partial b_{k,i}}\vel_k^\top\nabla b_{k,i} -  \sum_{i=1}^{n_{k'}}\frac{\partial\Lag_{k}}{\partial b_{k',i}}\vel_k^\top\nabla b_{k',i} }_{=T_2}\\
\sum_{j=1}^{m}\bigg\{
 \underbrace{\frac{\partial\Lag_k}{\partial\xi_j}\Dt^k\xi_j - \frac{\partial\Lag_k}{\partial\xi_j}\vel_k^\top\nabla\xi_j }_{=T_3 }
+ \underbrace{ m_k\vel_k^\top\nabla\xi_j \Dt^k\pth{\frac{\mathscr{M}_{k,j}}{m_k}}-m_k\Dt^k\xi_j\Dt^k\pth{\frac{\mathscr{M}_{k,j}}{m_k}} }_{=T_4} \\
 \underbrace{\frac{\partial\Lag_k}{\partial\pth{\nabla\xi_j}}\Dt^k\pth{\nabla\xi_j} - \vel_k^\top\nabla\cdot\pth{\nabla\xi_j\otimes\bm{D}_{k,j}} + \vel_k\top\pth{\nabla\cdot\bm{D}_{k,j}}\nabla\xi_j }_{=T_5}
\bigg\}.
\end{multline}
Here, for convenience, we used to notation $\Lag_k^\star = m_k\partial\Lag_k/\partial m_k - \Lag_k$. Terms $T_1$ to $T_5$ are then simplified into
\begin{subequations}
\begin{align}
T_1 &= - \nabla\cdot\pth{\Lag_k^\star\vel_k}, && T_2 = \sum_{i=1}^{n_{k'}}\frac{\partial\Lag_k}{\partial b_{k',i}}\partial_t b_{k',i} - \sum_{i=1}^{n_{k}}\frac{\partial\Lag_{k'}}{\partial b_{k,i}}\partial_t b_{k,i} ,\\
T_3 &= \sum_{j=1}^{m}\frac{\partial\Lag_k}{\partial\xi_j}\partial_t\xi_j, && T_4=-\sum_{j=1}^{m}m_k\Dt^k\pth{\frac{\mathscr{M}_{k,j}}{m_k}}\partial_t\xi_j,\\
T_5 &= \sum_{j=1}^{m}\acco{\nabla\cdot\pth{\bm{D}_{k,j}\partial_t\xi_j} - \nabla\cdot\bm{D}_{k,j}\partial_t\xi_j}.
\end{align}
\end{subequations}
Injecting these into \eqref{A:DtkLag_k_big_expression}, using the partial mass conservation and rearranging the various terms then yields
\begin{multline}
m_k\Dt^k\pth{\frac{\Ham_k}{m_k}} -\nabla\cdot\pth{\Lag_k^\star\vel_k + \sum_{j=1}^{m}\bm{D}_{k,j}\partial_t\xi_j} = \sum_{i=1}^{n_{k}}\frac{\partial\Lag_{k'}}{\partial b_{k,i}}\partial_t b_{k,i} - \sum_{i=1}^{n_{k'}}\frac{\partial\Lag_k}{\partial b_{k',i}}\partial_t b_{k',i} \\
+\sum_{j=1}^{m}\acco{\partial_t\mathscr{M}_{k,j} + \nabla\cdot\pth{\mathscr{M}_{k,j}\vel_k + \bm{D}_{k,j}} - \frac{\partial\Lag_k}{\partial\xi_j}}\partial_t\xi_j,
\end{multline}
which, owing to the partial mass equation, is equivalent to \eqref{eq:generic_partial_hamiltonian_eq}.

\section{Derivation of the new model in the  Eulerian framework}\label{app:derivation_eulerienne}
We derive the new all-topology model \eqref{syst:energy_form_with_lift} by Stationary Action Principle in the Eulerian framework. We consider the Lagrangian \eqref{eq:Lagrangian_Lin} and derive eqs. \eqref{sys:phasic_momentum_vol_frac_zeta} and \eqref{sys:mass_entropy_constraints}. This yields the isentropic version of the model, its energy form is then obtained through a straightforward change of variables to replace the phasic entropies $s_1$ and $s_2$ with the phasic total energy densities $m_1 E_1=m_1 e_1 + m_1\vel_1^2/2$ and $m_2 E_2=m_2 e_2 + m_2\vel_2^2/2$.

We consider the action defined through the Lagrangian \eqref{eq:Lagrangian_Lin}, and assume independent and compactly supported perturbations for each variable. Using integrations by parts, one may show that for any pair of $\mathscr{C}^1$ functions $\phi$, $\psi$, where at least one has compact support in $\{t\in(t_0,t_1),\, \bm{x}\in\Omega_t\}$, we have
\begin{equation}
\int_t\int_{\Omega_t} \phi\Dt^k\psi\mathrm{d}\bm{x}\mathrm{d}t = -\int_t\int_{\Omega_t} \psi\acco{\partial_t\phi + \nabla\cdot\pth{\phi \vel_k}}\mathrm{d}\bm{x}\mathrm{d}t,
\end{equation}
and an analogous result holds for material derivative and conservation at the mixture velocity $\vel$. Using this result, the critical points of the Action satisfy the following equations, each obtained by one of the independent perturbations as specified,
\begin{subequations}
\begin{align}
\delta\zeta: \qquad& \Dt\alpha_1=0,\label{B:vol_frac}\\
\delta\alpha_1: \qquad& \partial_t\pth{\rho\zeta} + \nabla\cdot\pth{\rho\zeta\vel} = p_1-p_2, \label{B:cons_zeta}\\
\delta\theta_k: \qquad& \Dt^k s_k =0,\\
\delta s_k: \qquad& \partial_t\pth{m_k\theta_k} + \nabla\cdot\pth{m_k\theta_k} = -m_k T_k, \label{B:cons_theta}\\
\delta \lambda_{k,i}: \qquad& \Dt^k\mu_{k,i}=0,\\
\delta \mu_{k,i}:  \qquad& \partial_t\pth{m_k\lambda_{k,i}} + \nabla\cdot\pth{m_k\lambda_{k,i}\vel_k},\label{B:cons_lambda}\\
\delta\xi_k: \qquad& \partial_t m_k + \nabla\cdot\pth{m_k\vel_k} =0,\label{B:mass_conservation}\\
\delta m_k: \qquad& \Dt^k\xi_k - \theta_k\Dt^k s_k - \zeta\Dt\alpha_1 - \sum_{i}\lambda_{k,i}\Dt^k\mu_{k,i} = \tfrac{1}{2}\vel_k^2 - h_k,\label{B:Dtkxik} \\
\delta \vel_k: \qquad & \vel_k = \nabla\xi_k - \theta_k\nabla s_k - \zeta\nabla\alpha_1 - \sum_i\lambda_{k,i}\nabla\mu_{k,i}.\label{B:vel_decomposition}
\end{align}
\end{subequations}
In particular, eq. \eqref{B:vol_frac} yields the volume fraction equation, while eq. \eqref{B:mass_conservation} yields the phasic mass conservation. Note that owing to the partial mass and total mass conservation laws, eqs. \eqref{B:cons_zeta}, \eqref{B:cons_theta} and \eqref{B:cons_lambda} may also be written
\begin{equation}\label{B:cons_to_Dt}
\Dt\zeta = (p_1-p_2)/\rho, \qquad \Dt^k\theta_k = -T_k, \qquad \Dt^k\lambda_{k,i}=0,
\end{equation}
and so we obtain the phasic entropy equations, and by applying the gradient operator, we obtain the evolution equation for $\bm{v}=\nabla\zeta$. At this stage, only the phasic momentum equations remain to be derived.  Using the following identity,
\begin{equation}
\Dt^k\pth{\phi\nabla\psi} = \nabla\pth{\phi\Dt^k\psi} + \pth{\nabla\psi}\Dt^k\phi - \pth{\nabla\phi}\Dt^k\psi + \pth{\nabla\vel_k}\pth{\phi\nabla\psi},
\end{equation}
we obtain the relation
\begin{multline}
\Dt^k\pth{\nabla\xi_k - \theta_k\nabla s_k - \zeta\nabla\alpha_1 - \sum_i\lambda_{k,i}\nabla\mu_{k,i}}  = \\ \nabla\brck{\Dt^k\xi_k - \theta_k\Dt^k s_k - \zeta\Dt^k\alpha_1 - \sum_i\lambda_{k,i}\Dt^k\mu_{k,i}} \\
+\pth{\nabla\theta_k}\Dt^k s_k - \pth{\nabla s_k}\Dt^k\theta_k + \pth{\nabla\zeta}\Dt^k\alpha_1 - \pth{\nabla\alpha_1}\Dt^k\zeta + \sum_i\acco{\pth{\nabla\lambda_{k,i}}\Dt^k\mu_{k,i} - \pth{\nabla\mu_{k,i}}\Dt^k\lambda_{k,i}} \\
-\pth{\nabla\vel_k}\pth{\nabla\xi_k - \theta_k\nabla s_k - \zeta\nabla\alpha_1 - \sum_i\lambda_{k,i}\nabla\mu_{k,i}}.
\end{multline}
Using the decomposition \eqref{B:vel_decomposition}, and the evolutions equations given by \eqref{B:Dtkxik} and \eqref{B:cons_to_Dt}, the previous equation writes
\begin{equation}
\Dt^k\vel_k + \pth{\nabla\vel_k}\vel_k = \nabla\pth{\tfrac{1}{2}\vel_k^2 - h_k} + T_k\nabla s_k + \pth{\nabla\zeta}\Dt^k\alpha_1 - \pth{\nabla\alpha_1}\Dt^k\zeta.
\end{equation}
We then use the thermodynamic identity $\mathrm{d}h_k = T_k\mathrm{d}s_k + \rho_k^{-1}\mathrm{d}p_k$, and the following equations,
\begin{align}
\Dt^k\alpha_1 &= \Dt\alpha_1 + \pth{\nabla\alpha_1}^\top (\vel_k-\vel) = \pth{\nabla\alpha_1}^\top (\vel_k-\vel),\\
\Dt^k\zeta &= \Dt\zeta + \pth{\nabla\zeta}^\top (\vel_k-\vel) = (p_1-p_2)/\rho + \pth{\nabla\zeta}^\top (\vel_k-\vel),
\end{align}
to obtain the velocity equation
\begin{equation}
\Dt^k\vel_k + \frac{1}{\rho_k}\nabla p_k + \frac{p_1-p_2}{\rho}\nabla\alpha_1 = \brck{\bm{v}\otimes\nabla\alpha_1 - \nabla\alpha_1\otimes\bm{v}}\pth{\vel_k-\vel},
\end{equation}
which, when multiplied by the partial mass $m_k$, yields the phasic momentum equation.

\vskip2pc

\bibliographystyle{RS.bst}

\end{document}